\theoremstyle{plain}
\newtheorem{thm}{Theorem}[section]
\newtheorem*{main}{Main Theorem}
\newtheorem{lem}[thm]{Lemma}
\newtheorem{qu}[thm]{Question}
\numberwithin{equation}{section}
\theoremstyle{definition}
\theoremstyle{remark}
\newcommand{\un}{\uparrow} 
\newcommand{\de}{\downarrow} 
\newcommand\nbd{\nobreakdash-\hspace{0pt}}
\newcommand\oh{{\mathbf 0}}
\newcommand\ba{{\mathbf a}}
\newcommand\bb{{\mathbf b}}
\newcommand\bx{{\mathbf x}}
\newcommand\by{{\mathbf y}}
\newcommand\cP{{\mathcal P}}
\newcommand\cQ{{\mathcal Q}}
\newcommand\cR{{\mathcal R}}
\newcommand\cS{{\mathcal S}}
\begin{document}

\title{On the existence of a strong minimal pair}

\author[George Barmpalias]{George Barmpalias}
\address{{\bf George Barmpalias:}
(1) State Key Lab of Computer Science,
Institute of Software,
Chinese Academy of Sciences,
Beijing 100190,
China and (2) School of Mathematics, Statistics and Operations Research,
Victoria University, Wellington, New Zealand}
\email{barmpalias@gmail.com}
\urladdr{\href{http://www.barmpalias.net}{http://www.barmpalias.net}}

\author[Mingzhong Cai]{Mingzhong Cai}
\address{{\bf Mingzhong Cai:} Department of Mathematics,
Dartmouth College, Hanover, NH 03755, USA}
\email{Mingzhong.Cai@dartmouth.edu}
\urladdr{\href{http://math.dartmouth.edu/~cai}%
{http://math.dartmouth.edu/~cai}}

\author[Steffen Lempp]{Steffen Lempp}
\address{{\bf Steffen Lempp:} Department of Mathematics,
University of Wisconsin, Madison, WI 53706, USA}
\email{lempp@math.wisc.edu}
\urladdr{\href{http://www.math.wisc.edu/~lempp}%
{http://www.math.wisc.edu/~lempp}}

\author[Theodore A. Slaman]{Theodore A. Slaman}
\address{{\bf Theodore A. Slaman:} Department of Mathematics,
University of California, Berkeley, CA 94720, USA}
\email{slaman@math.berkeley.edu}
\urladdr{\href{http://www.math.berkeley.edu/~slaman}%
{http://www.math.berkeley.edu/~slaman}}

\subjclass[2010]{03D25}
\keywords{Turing degrees, computably enumerable, minimal pair, extension of embeddings}
\thanks{This research was partially carried out while the first, third and
fourth authors were visiting fellows at the Isaac Newton Institute for the
Mathematical Sciences in the program ``Semantics \&\ Syntax''. The first
author's research was supported by the {\em Research fund for international
young scientists} numbers 613501-10236 and 613501-10535 from the National
Natural Science Foundation of China, and an {\em International Young
Scientist Fellowship} number 2010-Y2GB03 from the Chinese Academy of
Sciences. The second author's research was partly supported by an AMS Simons
Travel Grant and NSF Grant DMS-1266214. The third author's research was
partly supported by AMS-Simons Foundation Collaboration Grant 209087. The
fourth author's research was supported by the National Science Foundation,
USA, under Grant No. DMS-1001551 and by the Simons Foundation.}

\date{\today}

\begin{abstract}
We show that there is a strong minimal pair in the computably enumerable
Turing degrees, i.e., a pair of nonzero c.e.\ degrees~$\ba$ and~$\bb$ such
that $\ba \cap \bb = \oh$ and for any nonzero c.e.\ degree $\bx \le \ba$,
$\bb \cup \bx \ge \ba$.
\end{abstract}

\maketitle

\section{Introduction}

Much of the work on the degree structure of the computably enumerable (c.e.)\
Turing degrees has focused on studying its finite substructures and how they
can be extended to larger substructures. There are several reasons for this:
The partial order of the c.e.\ degrees is a very complicated algebraic
structure, with an undecidable first-order theory, by Harrington and
Shelah~\cite{HS82}. So, on the one hand, as in classical algebra, a
complicated structure is often best understood by studying its finite
substructures. On the other hand, the existential fragment of the first-order
theory of this degree structure (in the language of partial ordering~$<$,
least element~0 and greatest element~1) is known to be decidable by
Sacks~\cite{Sa63}, whereas by Lempp, Nies and Slaman~\cite{LNS98}, the
$\exists\forall\exists$\nbd theory of this structure (in the language of
partial ordering only) is undecidable. However, the decidability of the
$\forall\exists$\nbd theory of this structure has been an open question for a
long time; and it is this question which can be rephrased in purely algebraic
terms as a question about finite substructures:

\begin{qu}[Extendibility Question]\label{qu:extendqu}
Let~$\cP$ and~$\cQ_i$ (with $i < n$) be finite posets such that for all $i <
n$, $\cP \subseteq \cQ_i$. Under what conditions on~$\cP$ and the~$\cQ_i$ can
any embedding of~$\cP$ into the c.e.\ Turing degrees be extended to an
embedding of~$\cQ_i$ into the c.e.\ Turing degrees for some~$i$ (which may
depend on the embedding of~$\cP$)?
\end{qu}

Call a partially ordered set~$\cP$ \emph{bounded} if it contains
distinguished least and great elements~0 and~1, respectively. We can now
formulate the following modified

\begin{qu}[Extendibility Question with~0 and~1]\label{qu:extendqu01}
Let~$\cP$ and~$\cQ_i$ (with $i < n$) be finite bounded posets such that for
all $i < n$, $\cP \subseteq \cQ_i$. Under what conditions on~$\cP$ and
the~$\cQ_i$ can any embedding of~$\cP$ into the c.e.\ Turing degrees
(preserving~0 and~1) be extended to an embedding of~$\cQ_i$ into the c.e.\
Turing degrees (preserving~0 and~1) for some~$i$?
\end{qu}

The answer for $n=1$ to Question~\ref{qu:extendqu01} was given by the
following

\begin{thm}[Slaman, Soare~\cite{SS99}]\label{th:slsoare}
Uniformly in finite bounded posets~$\cP$ and~$\cQ$, there is an effective
procedure to decide whether any embedding of~$\cP$ into the c.e.\ Turing
degrees (preserving~0 and~1) be extended to an embedding of~$\cQ$ into the
c.e.\ Turing degrees (preserving~0 and~1).
\end{thm}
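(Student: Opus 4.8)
\smallskip
\noindent\emph{Outline of the argument.}
The plan is to isolate a purely order-theoretic property $\mathrm{Ext}(\cP,\cQ)$ of the pair of finite bounded posets and to prove that it holds if and only if every embedding of $\cP$ into the c.e.\ degrees preserving~0 and~1 extends to such an embedding of~$\cQ$; since $\mathrm{Ext}(\cP,\cQ)$ will quantify only over the finite data of $\cP$ and~$\cQ$, it is decidable uniformly, which is what yields the effective procedure. For $q\in\cQ\setminus\cP$ write $U(q)=\{r\in\cP:q\le_\cQ r\}$ and $D(q)=\{r\in\cP:r\le_\cQ q\}$. The property $\mathrm{Ext}(\cP,\cQ)$ should say that no $q\in\cQ\setminus\cP$ exhibits either of two local obstructions. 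The first is a \emph{lower} obstruction: $q$ is not the least element of~$\cQ$, yet the only element of~$\cP$ lying below every member of $U(q)$ is the least element of~$\cP$, so that the elements of $U(q)$ can be realized with infimum~$\oh$ and a nonzero image of~$q$ becomes impossible. The second is an \emph{upper} obstruction, present only because of the endpoint~1: $q$ is not the greatest element of~$\cQ$, yet the only element of~$\cP$ lying above every member of $D(q)$ is the greatest element of~$\cP$, so that the elements of $D(q)$ can be realized with join the complete degree and an image of~$q$ strictly below~1 becomes impossible. Part of the work is to confirm that these obstructions, phrased locally in this way, really do capture every way in which an extension can fail.

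For the necessity of $\mathrm{Ext}(\cP,\cQ)$ one argues directly. If, say, a lower obstruction occurs at~$q$, a priority construction produces a particular embedding of~$\cP$ in which the images of the elements of $U(q)$ form a minimal pair (or, in the many-element case, have infimum~$\oh$), amalgamated with coding, splitting and diagonalization strategies that force the remaining order, incomparability and endpoint relations of~$\cP$ to hold; in any extension to~$\cQ$ the image of~$q$ would have to be a nonzero degree below every image of a member of $U(q)$, which is impossible. An upper obstruction is handled dually, forcing the relevant join up to the complete degree while keeping the rest of~$\cP$ correct.

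For the sufficiency -- the substantial half -- assume $\mathrm{Ext}(\cP,\cQ)$ and fix an embedding of~$\cP$ realized by c.e.\ sets $P_r$ for $r\in\cP$, with the image of the least element computable and the image of the greatest element of degree~$\emptyset'$. One must enumerate c.e.\ sets $Q_q$ for $q\in\cQ\setminus\cP$ so that, setting $P_q:=Q_q$, the family $\{P_r:r\in\cQ\}$ realizes~$\cQ$: each strict inequality $r<_\cQ s$ demands $P_r\le_T P_s$ (a coding requirement, met by $P_s$-permitting when $s\in\cP$ and by interleaved coding when $s\notin\cP$), and each relation $r\not\le_\cQ s$ demands $P_r\not\le_T P_s$ (a Friedberg--Muchnik-style diagonalization). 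Because permitting below several given sets has to be reconciled with coding into several sets under construction, the strategies are laid out on a priority tree in the style of the pinball machine constructions of Lachlan and Lerman, and the construction has infinite-injury character. The hypothesis $\mathrm{Ext}(\cP,\cQ)$ is exactly what keeps the requirements jointly satisfiable: the absence of a lower obstruction prevents us from ever having to code a noncomputable set into some~$Q_q$ that must simultaneously be computable from two given sets whose only common lower bound is~$\oh$; and the absence of an upper obstruction prevents the sets we must keep \emph{strictly} below a given~$P_s$ from collectively pushing us up to the degree of~$P_s$ itself.

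The principal obstacle is precisely this sufficiency construction: one must design the tree of strategies together with the bookkeeping of coding markers, permitting and restraint so that the finitely many obstruction-free patterns of~$\cQ$ over~$\cP$ are all handled at once, and then bound the injury and verify that every requirement is satisfied along the true path. The endpoints contribute the key extra difficulty that one cannot ``escape upward'' past~$\emptyset'$; this is what gives the upper obstruction genuine force, and hence makes its absence a usable hypothesis, but it also constrains how coding and permitting may be organized near the top of~$\cQ$. Granting all this, necessity and sufficiency, together with the trivial decidability of $\mathrm{Ext}(\cP,\cQ)$, yield the stated uniform decision procedure.
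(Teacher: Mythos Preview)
The paper does not prove this theorem at all; it is quoted from Slaman and Soare~\cite{SS99} as background, so there is no ``paper's own proof'' to compare against. What the paper \emph{does} say about \cite{SS99}, however, already shows that your proposed characterization $\mathrm{Ext}(\cP,\cQ)$ is incorrect.

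Take the minimal ``saturation'' example described immediately after the theorem: $\cP=\{0,a,b,1\}$ with $a,b$ incomparable, and $\cQ=\cP\cup\{x,z\}$ with $0<x<a,z$, $b<z<1$, $x\nleq b$, $a\nleq z$. For $q=x$ one has $U(x)=\{a,1\}$, and $a\in\cP$ lies below every member of $U(x)$, so your lower obstruction does not fire; for $q=z$ one has $D(z)=\{0,b\}$, and $b\in\cP$ lies above every member of $D(z)$, so your upper obstruction does not fire either. Hence $\mathrm{Ext}(\cP,\cQ)$ holds in your sense, and your argument would claim that every $\{0,1\}$-preserving embedding of $\cP$ extends to $\cQ$. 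But the paper explicitly identifies this pair as a genuine obstruction to extension: there are embeddings of $\cP$ that do \emph{not} extend to $\cQ$. Indeed, the Main Theorem of the present paper constructs exactly such an embedding (a strong minimal pair).

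So the gap is not in the execution of your priority construction but in the combinatorial condition itself: your two obstructions (an implicit infimum forced to $\oh$, or an implicit supremum forced to $\oh'$) do not exhaust the ways extension can fail. The Slaman--Soare analysis also requires the saturation-type obstruction, in which a new element $x$ sits below some $a\in\cP$ while a companion new element $z$ sits above some $b\in\cP$ and above $x$, with $x\nleq b$ and $a\nleq z$; in general there is a nonempty set $Z(x)$ of such $z$'s. Any correct decidable criterion must detect these patterns as well, and the sufficiency construction must be redesigned to exploit their absence.
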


This result of Slaman and Soare built on a long line of research into the
algebraic structure of the c.e.\ degrees, starting with the Sacks Splitting
and Density Theorems~\cite{Sa63b, Sa64} and the proof of the existence of a
minimal pair of c.e.\ degrees by Lachlan~\cite{La66} and Yates~\cite{Ya66}.

In their proof in~\cite{SS99}, Slaman and Soare identify two basic obstacles
to extending an embedding. The first of these is lattice-theoretic: The c.e.\
degrees form an upper semilattice in which the meet of some but not all pairs
of degrees exists. In fact, the major hurdle toward deciding the
$\forall\exists$\nbd theory of this structure has been the long-standing
lattice embeddings problem, asking for an (effective) characterization of
those finite lattices which can be embedded into the c.e.\ Turing degrees.
(Note that the lattice embeddings problem can be phrased as a subproblem of
the Extendibility Question by making all the~$\cQ_i$ one-point extensions
of~$\cP$, each testing the preservation of a particular meet or join in the
lattice embedding. Lerman~\cite{Le00} gave a noneffective (indeed a
$\Pi^0_2$-)condition for lattice embeddability; a more recent survey is
Lempp, Lerman and Solomon~\cite{LLS06}.)

The other basic obstacle to extending an embedding identified by Slaman and
Soare is a phenomenon sometimes called ``saturation''; a minimal example of
it is given by setting $\cP = \{0,a,b,1\}$ (with incomparable $a,b$) and $\cQ
= \cP \cup \{x,z\}$ (with $0<x<a,z$ and $b<z<1$ but $x \nleq b$ and $a \nleq
z$). In the general case, there may be a number of such elements $x \in
\cQ-\cP$, and for each~$x$ there will be a non-empty set $Z(x) \subseteq
\cQ-\cP$ of such~$z$.

An early example of a specific instance of an answer to the Extendibility
Question~\ref{qu:extendqu01} for $n>1$ was given by Lachlan's Nondiamond
Theorem~\cite{La66}: No minimal pair of c.e.\ degrees can cup to~$\oh'$. (For
this, we set $\cP = \{0,a,b,1\}$ (with incomparable $a,b$), $\cQ_0 = \cP \cup
\{x\}$ (with $0<x<a,b$), and $\cQ_1 = \cP \cup \{y\}$ (with $a,b<y<1$).) So
this is an instance of two lattice-theoretic obstructions which cannot be
overcome individually, but can be overcome in combination.

The main theorem of this paper provides an example where a lattice-theoretic
obstruction and a ``saturation'' obstruction cannot each be overcome either
individually or in combination:

\begin{main}
There is a \emph{strong minimal pair} in the c.e.\ Turing degrees, i.e.,
there are nonzero c.e.\ degrees~$\ba$ and~$\bb$ such that $\ba \cap \bb =
\oh$ and for any nonzero c.e.\ degree $\bx \le \ba$, $\bb \cup \bx \ge \ba$.
\end{main}

Note that this is an instance of the Extendibility
Question~\ref{qu:extendqu01} by setting $\cP = \{0,a,b,1\}$ (with
incomparable $a,b$), $\cQ_0 = \cP \cup \{x\}$ (with $0<x<a,b$) and $\cQ_1 =
\cP \cup \{x,z\}$ (with $0<x<a,z$ and $b<z<1$ but $x \nleq b$ and $a \nleq
z$).

We should mention here that our Main Theorem has a long and twisted history.
It was discussed and claimed, in both directions, by a number of researchers
over the past 25 years. The only published proof is in Lerman's
monograph~\cite{Le10}, where he attributes the theorem to Slaman (also see
the review by Barmpalias~\cite{Ba11}). However (per personal communication
with Lerman), the proof published by Lerman~\cite{Le10} has a gap, which is
filled by a feature which we introduce in our proof here.

We would like to state here the following related question, which we leave
open:

\begin{qu}\label{qu:2smp}
Is there a ``two-sided'' strong minimal pair; i.e., are there nonzero c.e.\
degrees~$\ba$ and~$\bb$ such that $\ba \cap \bb = \oh$, for any nonzero c.e.\
degree $\bx \le \ba$, $\bb \cup \bx \ge \ba$, and for any nonzero c.e.\
degree $\by \le \bb$, $\ba \cup \by \ge \bb$?
\end{qu}

This is, of course, an instance of the Extendibility
Question~\ref{qu:extendqu01} (with $n=3$, combining one lattice-theoretic and
two ``saturation'' obstructions, namely, setting $\cP = \{0,a,b,1\}$ (with
incomparable $a,b$), $\cQ_0 = \cP \cup \{w\}$ (with $0<w<a,b$), $\cQ_1 = \cP
\cup \{x,z\}$ (with $0<x<a,z$ and $b<z<1$ but $x \nleq b$ and $a \nleq z$),
and $\cQ_2 = \cP \cup \{x',z'\}$ (with $0<x'<b,z'$ and $a<z'<1$ but $x' \nleq
a$ and $b \nleq z'$). We remark here that our Question~\ref{qu:2smp} has a
negative answer if we also require the join of (the images of)~$a$ and~$b$ to
be ``branching'' (i.e., meet-reducible); i.e., any embedding of $\cP =
\{0,a,b,c,d,e,1\}$ (with incomparable $a,b$, incomparable $d,e$, and
$a,b<c<d,e$) extends to an embedding of $\cQ_0 = \cP \cup \{w\}$ (with
$0<w<a,b$), $\cQ_1 = \cP \cup \{x,z\}$ (with $0<x<a,z$ and $b<z<1$ but $x
\nleq b$ and $a \nleq z$), $\cQ_2 = \cP \cup \{x',z'\}$ (with $0<x'<b,z'$ and
$a<z'<1$ but $x' \nleq a$ and $b \nleq z'$), $\cQ_3 = \cP \cup \{y\}$ (with
$a,b<y<c$), or $\cQ_4 = \cP \cup \{y'\}$ (with $c<y'<d,e$). This last result
was observed by Slaman by combining Theorem~\ref{th:slsoare} with the
Non-Embeddability Condition (NEC) of Ambos-Spies and Lerman~\cite{AL86}. This
last result also suggests that the full answer to our Extendibility
Questions~\ref{qu:extendqu} and~\ref{qu:extendqu01} is likely to be very
hard.

\section{Requirements and Priority Tree}
In this section we describe a set of requirements that guarantee our main
theorem, and the way these requirements can be assigned to strategies on a
priority tree. This methodology is rather standard for priority arguments of
this type, and the reader is referred to the arguments in~\cite{FeSo81, SS93}
(Harrington's plus-cupping theorem and Slaman's triple) which exhibit certain
similarities. Moreover, these ideas are refinements of certain devices that
were used in Lachlan's original $\mathbf{0}'''$\nbd priority argument
in~\cite{La75}. We will also refer to these constructions in
Section~\ref{se:overview}, in order to explain the origins of the basic
strategies for meeting our requirements.

\subsection{List of requirements}\label{subse:listreq}
As usual, we construct two c.e.\ sets~$A$ and~$B$ such that in the end
$\mathbf{a}=\deg(A)$ and $\mathbf{b}=\deg(B)$. We first have the requirements
which satisfy that~$\mathbf{a}$ and~$\mathbf{b}$ form a strong minimal pair:
$$
\cR_i:\ \Phi_i(A)=W_i\ \Rightarrow
\left[\exists \Gamma (\Gamma(B\oplus W_i)=A) \vee
\exists \Delta (\Delta=W_i)\right].
$$

Then we have the diagonalization requirements which guarantee that~$A$ is not
below~$B$ [$B$ not being below~$A$ will be guaranteed automatically]:
$$
\cS_i:\ \Psi_i(B)\neq A.
$$

Note that each~$\cS_i$ states that there exists an~$x$ such that
$\Psi_i(B;x)\neq A(x)$. In the construction, each $\cS_i$\nbd node has
subsidiary $\cS_{i,j}$\nbd nodes, each using a possibly different
\emph{killing point} (to be defined and clarified later) for
forcing~$\Psi_i(B;x)$ to diverge. We call a node associated with such~$\cS_i$
a \emph{parent node} and a node associated with~$\cS_{i,j}$ a \emph{child
node}. At each stage, the collection of an $\cS_i$\nbd parent node and its
previously visited, uncanceled child nodes is called an $\cS_i$\nbd
\emph{family} (of that stage).

\subsection{Discussion of the requirements in a historical context}%
\label{se:hisdi} It is worth noting the similarity of the requirements with
those of the arguments in~\cite{La75, FeSo81, SS93}. Such a discussion may be
beneficial to the reader who is familiar with these older and simpler
arguments; but it may also be helpful to the reader who is not an expert
in~$\mathbf{0}'''$\nbd priority arguments and might like to first consult
these simpler proofs. In its simple form, Harrington's plus-cupping theorem
(presented in~\cite{FeSo81} but also in~\cite{Sho90}) asserts the existence
of a nonzero degree~$\mathbf{a}$ such that every noncomputable
$\mathbf{w}\leq \mathbf{a}$ cups to~$\mathbf{0}'$ (i.e., there exists some
$\mathbf{b}<\mathbf{0}'$ such that $\mathbf{0}'\leq \mathbf{a}\cup
\mathbf{b}$). The main requirements for this theorem (excluding the
noncomputability of~$A$) can be written as
$$
\cR^{\ast}_i:\ \Phi_i(A)=W_i\ \Rightarrow
\left[\exists \Gamma, B_i\ (\Gamma(B_i\oplus W_i)=A\ \wedge
    \ \emptyset'\not\leq_T B_i) \vee
\exists \Delta (\Delta=W_i)\right].
$$
The similarity of the plus-cupping requirements with our requirements of
Section~\ref{subse:listreq} is clear. The main difference is that in the
plus-cupping requirements, for each~$W_i$ we can build a different~$B_i$
while in our requirements there is a unique~$B$ that must accommodate all
conditions. Another relevant example is the construction of a so-called
`Slaman triple', i.e., three degrees $\mathbf{a}, \mathbf{b}, \mathbf{c}$
such that $\mathbf{a}>\mathbf{0}$, $\mathbf{c}\not\leq\mathbf{b}$ and for all
noncomputable $\mathbf{w}\leq\mathbf{a}$ we have $\mathbf{c}\leq
\mathbf{w}\cup\mathbf{b}$. This was published in~\cite{SS93} (based on some
unpublished notes of Slaman from 1983) and it is clear that if we also
require $\mathbf{a}=\mathbf{c}$ then a Slaman triple becomes the strong
minimal pair of our main theorem. The requirements for a Slaman triple
(excluding the noncomputability of~$\mathbf{a}$ and
$\mathbf{c}\not\leq\mathbf{b}$, which is similar to our positive
$\mathcal{S}_i$\nbd requirements in Section~\ref{subse:listreq}) are
$$
\cR^{\ast\ast}_i:\ \Phi_i(A)=W_i\ \Rightarrow
\left[\exists \Gamma\ (\Gamma(B\oplus W_i)=C) \vee
\exists \Delta (\Delta=W_i)\right].
$$
The similarities of $\cR^{\ast\ast}_i$ with our~$\cR_i$ are also clear.
Instead of using the same set for the roles of~$A$ and~$C$ we use two,
therefore relaxing the conflict that is generated between the positive and
the negative requirements. On the other hand, we use a single~$B$ here, in
contrast with~$\cR^{\ast}_i$, where we had a different~$B_i$ for each
condition~$\cR^{\ast}_i$.

The strategies used in the arguments in~\cite{FeSo81, SS93} involve a
gap-cogap technique for the construction of the Turing reductions~$\Gamma$,
which originated in~\cite{La75} and which will also be used in our argument.
In Section~\ref{se:overview}, we will discuss this technique in detail, as
well as the additional difficulties that conditions~$\cR_i$ present, which
are the reason for the more complicated approach we eventually take.

\subsection{Priority tree}\label{subse:tree}
Our priority tree is defined top down, i.e., the top node has the highest
priority. Each node has several possible outcomes, prioritized left to right.

Each $\cR_i$\nbd node~$\alpha$ has two outcomes:~$i$ (infinite) and~$f$
(finite). Along the $i$\nbd outcome, we are defining a
functional~$\Gamma_\alpha$ for computing~$A$ from~$B\oplus W_i$. Such a
node~$\alpha$ is \emph{active} at some~$\beta$ below if there is
no~$g_\alpha$\nbd outcome (see below) between~$\beta$ and~$\alpha$ and there
are no~$\alpha'$ and~$\beta'$ with $\alpha' \subset \alpha \subset \beta'
\subset \beta$ such that~$\alpha'$ and~$\beta'$ form a pair (see definition
below).

Each $\cS_i$\nbd parent node~$\beta$ has three outcomes:~$d$
(diagonalization),~$g$ (gap, defined below), and~$w$ (wait). The $g$\nbd
outcome stands for an apparent computation $\Psi_i(B;x)=0$ against which we
cannot diagonalize (i.e., put~$x$ into~$A$ without risking to lose the
computation $\Psi_i(B;x)=0$). We arrange the priority tree in such a way that
immediately following the $g$\nbd outcome of each $\cS_i$\nbd parent node, we
have its first $\cS_{i,0}$\nbd child node.

Each $\cS_{i,j}$\nbd child node~$\beta$ is below the $g$\nbd outcome of its
$\cS_i$\nbd parent node and has outcomes $g_{\alpha_0},\dots,g_{\alpha_k}, c$
(ordered from left to right). Each~$g_{\alpha}$ (which, following convention,
again stands for ``gap'') corresponds to one active $\cR$\nbd node~$\alpha$
above the $\cS_i$\nbd parent node (\emph{not} the $\cS_{i,j}$\nbd child
node), ordered in such a way that if $\alpha \subset \alpha'$,
then~$g_\alpha$ is to the left of~$g_{\alpha'}$. For the nodes extending the
$g_\alpha$\nbd outcome, we say that~$\alpha$ and this child node~$\beta$ form
a \emph{pair}. In addition, we also define a computable function~$\Delta$
along the $g_\alpha$\nbd outcome for computing the set~$W$ corresponding to
the requirement at~$\alpha$. Extending a $g_\alpha$\nbd outcome, we stop
adding new $\cS_{i,k}$\nbd child nodes (and believe that this requirement has
been satisfied forever).
There is only one $c$\nbd outcome ($c$ stands for ``claim'') to the right of
all the $g_\alpha$\nbd outcomes. Extending such an outcome, we continue to
add new $\cS_{i,k}$\nbd child nodes.
Of course, we arrange the priority tree in a reasonable way such that along
every infinite path, each requirement is represented at most once by a
strategy (or pair of strategies, in the case of the $\cR$\nbd requirements)
which is not enclosed by any other pair.

\section{Overview of the Strategies and Their Conflicts}\label{se:overview}
In this section we discuss the basic strategies that are a starting point for
the more complex strategies that are needed for the satisfaction of the
requirements. We start with the standard gap-cogap strategy for the
satisfaction for simple combinations of prioritized conditions, and slowly
build the ideas needed for the general case. Recall that the tree of
strategies grows from the root downwards, so that a strategy node {\em above}
another is of higher priority with respect to the latter one. The main
conflict occurs between the `positive' requirements (or
strategies)~$\mathcal{S}_j$ (which typically put numbers into~$A$ and try to
preserve a $B$\nbd computation by restraining the enumeration of small
numbers into~$B$) and the `negative' requirements~$\mathcal{R}_i$ which
typically facilitate the enumeration of numbers into~$B$, which are often
needed for the rectification of the functional~$\Gamma_i$ that they build.
The latter rectification is needed due to the enumeration of numbers into~$A$
by some positive strategies. In the preliminary Sections~\ref{subse:SoR}
and~\ref{subse:SmR}, we assume that strategy~$\mathcal{S}_j$ operates from a
single node, instead of being split into a parent node and child nodes as we
described in Section~\ref{subse:tree}. We do this for simplicity, as these
sections only serve as an illustration of the typical gap-cogap strategy,
which is sufficient for simple configurations of requirements but not for the
full construction.

\subsection{Typical gap-cogap strategy: one \texorpdfstring{$\mathcal{S}$}{S}
below one \texorpdfstring{$\mathcal{R}$}{R}}\label{subse:SoR} The strategy of
an $\mathcal{R}_i$\nbd node is to simply enumerate $\Gamma_i$\nbd
computations for the reduction $\Gamma_i(B\oplus W_i)=A$, and enumerate a
number into~$B$ when there is a number~$k$ such that $\Gamma_i(B\oplus W_i;
k)=A(k)$. In the latter case, this number would typically be the current use
of the computation $\Gamma_i(B\oplus W_i; k)$, and its enumeration
facilitates the rectification of the reduction. When~$k$ is the witness of
some positive requirement (or some related parameter, see below) then the use
of the rectified computation may need to be increased to a large number (for
reasons that will become clear when we discuss the $\mathcal{S}_j$\nbd
strategy).

The~$\mathcal{S}_j$ typically picks a witness~$x$ and waits for the
computation $\Psi_j(B; x)$ to converge with output value~0. If and when this
happens, a typical diagonalization strategy would prompt for the enumeration
of~$x$ into~$A$ and the preservation of the $B$\nbd use of the computation
$\Psi_j(B; x)$. However, this naive strategy is not successful in the
present context, since the higher-priority $\mathcal{R}_i$\nbd strategy may
enumerate into~$B$ a number that can destroy the computation $\Psi_j(B; x)$.
Such an enumeration may be caused due to the enumeration of~$x$ into~$A$
by~$\mathcal{S}_j$, and the instructions of~$\mathcal{R}_i$ to maintain the
correctness of~$\Gamma_i$. This is the primary conflict between the
requirements, and at this elementary level it can be resolved by a standard
gap-cogap strategy on the behalf of~$\mathcal{S}_j$ (much like in the
arguments in~\cite{FeSo81, SS93} which we discussed in
Section~\ref{se:hisdi}).

The gap-cogap strategy for~$\mathcal{S}_j$ typically operates in cycles,
periodically restraining~$A$ or~$B$, thus building a potential
computation~$\Delta$ for the set~$W_i$. Prior to the start of the alternating
cycles, it chooses a witness~$x$. The first step in each cycle is:
\begin{itemize}
\item[(w)]
Wait for the computation $\Psi_j(B; x)$ to converge with output value 0.
\end{itemize}
If and when this happens, it checks if the $B$\nbd use of the computation
$\Gamma_i(B\oplus W_i)$ is less than the $B$\nbd use of the computation
$\Psi_j(B; x)$. If this is not true, then it can safely enumerate~$x$
into~$A$ and restrain the $B$\nbd use of the computation $\Psi_j(B; x)$,
thereby securing the disagreement $\Psi_j(B; x)\neq A(x)$. Note that in this
case, the $\Gamma_i$\nbd rectification that may be prompted
by~$\mathcal{R}_i$ will not affect this diagonalization. Otherwise, it will
consider the $W_i$\nbd use of $\Gamma_i(B\oplus W_i; x)$, say~$u_w$, and
\begin{itemize}
\item[(a1)] drop any restraint on~$A$ (thus allowing~$W_i$ to change, under
the assumption that $\Phi_i(A)=W_i$);
\item[(a2)] define $\Delta=W_i$ up to~$u_w$ and restrain enumerations
into~$B$ up to the $B$\nbd use of $\Psi_j(B; x)$.
\end{itemize}
This action initiates an interval of stages that may be called an `$A$\nbd
gap', which is characterized by a lack of restraint on~$A$ and the
enforcement of a restraint on~$B$. During this interval of
stages,~$\mathcal{R}_i$ receives the instruction to increase the use of
$\Gamma_i(B\oplus W_i; x)$ to a large number (larger than the use of
$\Psi_j(B; x)$) in the event that~$W_i$ changes below~$u_w$. When the
strategy is revisited (as in a standard tree of strategies argument),
\begin{itemize}
\item[(b1)]
if~$W_i$ has changed below~$u_w$ since the stage the $A$\nbd gap was
opened, it enumerates~$x$ into~$A$, while enforcing a permanent restraint
on~$B$ for the preservation of $\Psi_j(B; x)\neq A(x)$;
\item[(b2)]
otherwise, it closes the $A$\nbd gap (thereby reinforcing a restraint
on~$A$, equal to the use of the current, possibly new computation
$\Phi_i(A)=W_i$ up to~$u_w$), and opens a $B$\nbd gap by dropping the
restraint on~$B$ and enumerating the $B$\nbd use of $\Gamma_i(B\oplus
W_i; x)$ into~$B$.
\end{itemize}
Note that step (b2) is possible since~$\mathcal{S}_j$ works under the
assumption that the reduction $\Phi_i(A)=W_i$ has infinitely many
expansionary stages. Moreover, note that the $B$\nbd enumeration in step (b2)
will destroy the computation $\Psi_j(B; x)$. Now let us review the long-term
behavior of the~$\mathcal{S}_j$. The routine comes to halt if one of the
following cases occurs at some stage~$s_0$:
\begin{itemize}
\item[(1)] $\Psi_j(B; x)$ remains undefined or not equal to~$0$ at all stages
larger than~$s_0$; or
\item[(2)] $x$ is enumerated into~$A$ by~$\mathcal{S}_j$.
\end{itemize}
In the first case,~$\mathcal{S}_j$ is clearly satisfied (this can be viewed
as a $\Sigma^0_2$\nbd outcome). In the second case, according to the
strategy, the disagreement $\Psi_j(B; x)\neq A(x)$ will be preserved (since
the $B$\nbd use of $\Gamma_i(B\oplus W_i; x)$ would be larger than the
$B$\nbd use of $\Psi_j(B; x)$). Hence in this case also (assuming that basic
priority is respected amongst the requirements)~$\mathcal{S}_j$ is met in a
$\Sigma^0_2$\nbd way. The interesting case is when these events do not occur,
in which case the following cycle of `states' of the $\mathcal{S}_j$\nbd
strategy repeats indefinitely:
\begin{equation}\label{eq:cycle}
\textrm{(w)} \to
\textrm{(a1)} \to \textrm{(a2)} \to
 \textrm{(b2)} \to \textrm{(w)} \to \cdots
\end{equation}
Under this infinitary $\Pi^0_2$\nbd scenario, the witness~$x$ remains fixed,
while the $\mathcal{S}_j$\nbd strategy alternates between $A$\nbd gap states
(when $B$\nbd restraint is imposed but not $A$\nbd restraint) and $B$\nbd gap
states (when $A$\nbd restraint is imposed but not $B$\nbd restraint). The
$A$\nbd gap interval consists of the steps (w), (a1), (a2) (where the latter
two typically occur at the same stage) while the $B$\nbd gap interval
consists of step (b2). In this case, observe that the $\mathcal{S}_j$\nbd
strategy builds a total computable function~$\Delta$ which correctly
computes~$W_i$: new computations are produced at the (a2) steps, and
throughout the stages none of these computations are falsified. Indeed, if
such a computation were falsified (through a $W_i$\nbd change below the
maximum initial segment of numbers on which~$\Delta$ is defined) then the
strategy would execute step (b1), thus ending the perpetual
cycle~\eqref{eq:cycle} and producing a successful $\Sigma^0_2$\nbd outcome
for~$\mathcal{S}_j$. On the other hand, under this outcome, the use of
$\Gamma_i(B\oplus W_i; x)$ is driven to infinity, thereby making~$\Gamma_i$
partial at the chosen number~$x$. This aspect of the strategy is sometimes
known as `capricious destruction' of~$\Gamma_i$, since our strategy
intentionally `kills' the very reduction that we build at a higher-priority
node (but for good reasons, see the next paragraph).

Hence, under this infinitary $\Pi^0_2$\nbd outcome of~$\mathcal{S}_j$ (often
called a `gap outcome'), the actions of this strategy satisfy the
higher-priority~$\mathcal{R}_i$, as well as itself since the use of
$\Psi_j(B; x)$ is driven to infinity. On the other hand,~$\mathcal{S}_j$ can
pass the information that~$\Gamma_i$ is partial at~$x$ to the lower-priority
requirements, so a lower~$\mathcal{S}_{j'}$ can successfully implement a
standard diagonalization strategy by only considering computations
$\Psi_{j'}(B; y)$ which have use $B$\nbd use below the $B$\nbd use of
$\Gamma_i(B; x)$ (which goes monotonically to infinity). In the next section,
we see that this gap-cogap strategy also works in a nested environment, thus
satisfying~$\mathcal{S}_j$ below any finite number of $\mathcal{R}_i$\nbd
strategies.

\subsection{Typical gap-cogap strategy: one \texorpdfstring{$\mathcal{S}$}{S}
below many \texorpdfstring{$\mathcal{R}$}{R}}\label{subse:SmR} When an
$\mathcal{S}_j$\nbd strategy works below a finite number of
$\mathcal{R}_i$\nbd strategies, it needs to resolve the same issues as the
ones discussed in Section~\ref{subse:SoR}, but this time with respect to each
of the higher-priority strategies. More specifically, it may have trouble
preserving a diagonalization $\Psi_j(B; x)\neq A(x)$ due to a number of
$\Gamma$\nbd reductions that are being built with higher priority. In this
section, we show that a nested version of the strategy we discussed in
Section~\ref{subse:SoR} suffices to deal with these conflicts. This nesting
approach is also typical in arguments like those in~\cite{FeSo81, SS93}. For
simplicity, suppose that a node working for~$\mathcal{S}_0$ is below a node
for~$\mathcal{R}_1$, which in turn is below a node working
for~$\mathcal{R}_0$. The methodology we give below generalizes trivially to
the case where we have a node for~$\mathcal{S}_0$ below nodes for
$\mathcal{R}_k,\dots, \mathcal{R}_0$. The idea is to implement the gap-cogap
strategy for~$\mathcal{S}_0$ sequentially, first with respect
to~$\mathcal{R}_1$ and then with respect to~$\mathcal{R}_0$.

Consider the gap-cogap strategy of~$\mathcal{S}_0$ with respect
to~$\mathcal{R}_1$. Under the $\Pi^0_2$\nbd outcome of this strategy,~$W_1$
is proven computable while~$\Gamma_1$ is partial at a specified level (namely
the witness~$x$ of~$\mathcal{S}_0$). In this case, another
requirement~$\mathcal{S}_1$ can work below~$\mathcal{S}_0$, with the
additional information that~$\Gamma_1$ is partial at~$x$. Then a standard
gap-cogap strategy for the copy of~$\mathcal{S}_1$ against~$\mathcal{R}_0$
alone can successfully work for satisfaction of both requirements (as in
Section~\ref{subse:SoR}).

On the other hand, there is a possibility that this gap-cogap routine
of~$\mathcal{S}_0$ against~$\mathcal{R}_1$ ends up having a $\Sigma^0_2$\nbd
outcome. In this case, the strategy would typically go to step (b1). However,
at such a stage,~$\mathcal{S}_0$ can no longer proceed directly with the
diagonalization $\Psi_j(B; x)\neq A(x)$. Indeed, the
higher-priority~$\mathcal{R}_0$ could potentially destroy such a disagreement
(in a way that we have already discussed: through a rectification of
its~$\Gamma_0$ reduction). In this case,~$\mathcal{S}_0$ needs to start a new
gap-cogap cycle with respect to~$\mathcal{R}_0$. If this nested cycle repeats
indefinitely, it provides a computation~$\Delta_0$ for~$W_0$ while making
both~$\Gamma_0$ and~$\Gamma_1$ partial at~$x$. In this case, the highest
priority~$\mathcal{R}_0$ is met, at the expense of~$\mathcal{R}_1$
and~$\mathcal{S}_0$ which are `injured' and need to be satisfied by means of
additional copies of their strategies under the information that~$\Gamma_0$
is partial at~$x$. This is certainly possible, as it reduces to the cases we
have already discussed. If, on the other hand, the second (nested) gap-cogap
cycle of~$\mathcal{S}_0$ reaches step (b1), then it can diagonalize, thereby
producing the disagreement $\Psi_j(B; x)\neq A(x)$ and preserving it
indefinitely (since the relevant $\Gamma_0$- and $\Gamma_1$\nbd uses are
sufficiently large, due to the $W_0$- and $W_1$\nbd changes that occurred,
respectively).

We may sum up the nesting of the gap-cogap strategies as follows.
Strategy~$\mathcal{S}_0$ first attempts to `clear' the computation $\Psi_j(B;
x)$ from the $\Gamma_1$\nbd use on~$x$. If and when it achieves this (through
a $W_1$\nbd change) it proceeds to clear this computation from the
$\Gamma_0$\nbd use on~$x$. If and when this is achieved, it can successfully
diagonalize. In any other case (except the trivial case when~$\Psi_j(B; x)$
remains undefined or not equal to~$0$), it produces a $\Pi^0_2$\nbd outcome
that enables copies of the existing strategies to satisfy their corresponding
requirements at nodes of lower priority.
It is important to note that in the above scenario, after  the computation $\Psi_j(B;x)$
is cleared from the $\Gamma_1$\nbd use on~$x$, the strategy has one chance to clear it from the
$\Gamma_0$\nbd use on~$x$ (namely, in the next cycle when the $A$-restraints drop). If this fails, the strategy starts the module anew, waiting
again for the convergence of $\Psi_j(B;x)$.

Note that here we have two different
$\Pi^0_2$\nbd outcomes corresponding to the following cases:
\begin{itemize}
\item[(1)] we never clear the $\Gamma_1$\nbd use;
\item[(2)] we clear the $\Gamma_1$\nbd use infinitely often but we never clear
the $\Gamma_0$\nbd use.
\end{itemize}
Also note that we only attempt to clear the $\Gamma_0$\nbd use when we have
already cleared the $\Gamma_1$\nbd use. In this sense, we say
that~$\mathcal{S}_0$ first opens a gap for~$\mathcal{R}_1$ and then
for~$\mathcal{R}_0$.

These nested gap-cogap strategies are sufficient for dealing with one
$\mathcal{S}$\nbd strategy below any finite number of $\mathcal{R}$\nbd
strategies. When we consider multiple $\mathcal{S}$\nbd strategies below a
number of $\mathcal{R}$\nbd strategies, new conflicts occur, which we discuss
in the following sections.

Now in our formal construction (see Section~\ref{AB}), we instead handle the
gap-cogap requirements from different notes by alternating global $A$\nbd
stages and $B$\nbd stages in the background. During $A$\nbd stages, we are
allowed to change~$A$ but not~$B$; during $B$\nbd stages, we are allowed to
change~$B$ but not~$A$. Later in the discussion, we will use $A$\nbd stages
and $B$\nbd stages instead of the gap-cogap terminology.

In particular, in the above construction, we do not need to make enumerations immediately but can wait for an appropriate stage to perform the action. For example, after we enumerated a diagonalization witness $x$ into $A$ during an $A$-stage, we cannot simultaneously enumerate the $\Gamma$-use (for the correction of the $\Gamma$ functional computing $A$) into $B$, but we can do this later when we next time visited the corresponding $\mathcal{R}$ node.

\subsection{A minimal new example: two \texorpdfstring{$\mathcal{S}$}{S}
below two \texorpdfstring{$\mathcal{R}$}{R}}\label{sec:min-ex} Here, we
illustrate the idea by a minimal example where we see a conflict which needs
some new strategy, and we will briefly explain how to handle the conflict.
(See Figure~\ref{fig1}.)

\begin{figure}
\centerline{
\xymatrix{\cR_0\ar@{-}[d]^i & & & \cR_0\ar@{-}[d]^i & \\
\cR_1\ar@{-}[d]^i & & & \cR_1\ar@{-}[d]^i & \\
\cS_0(x_0)\ar@{-}[d]^g& & &\cS_0(x_0)\ar@{-}[d]^g&\\
\cS_{0,0}\ar@{-}[d]_{\Delta_1}\ar@{-}[dr]^{c(x_1)} & & &
   \cS_{0,0}\ar@{-}[d]^{c(x')}&\\
\cS_1(x_1)& \cS_{0,1}&& \cR_2\ar@{-}[d]^{i}&\\
 &&&\cS_1(x_1)\ar@{-}[d]^{g}&\\
 &&&\cS_{1,0}\ar@{-}[d]_{\Delta_2}\ar@{-}[dr]^{c(x_2)}&\\
 &&&\cS_{0,1}\ar@{-}[dl]_{\Delta_1}\ar@{-}[d]^{c(x_2)}&\cS_{0,1}\\
 &&\cS_2(x_2)&\cS_3(x_3)&}}
\caption{A minimal example (left) and a complete example (right)}\label{fig1}
\end{figure}
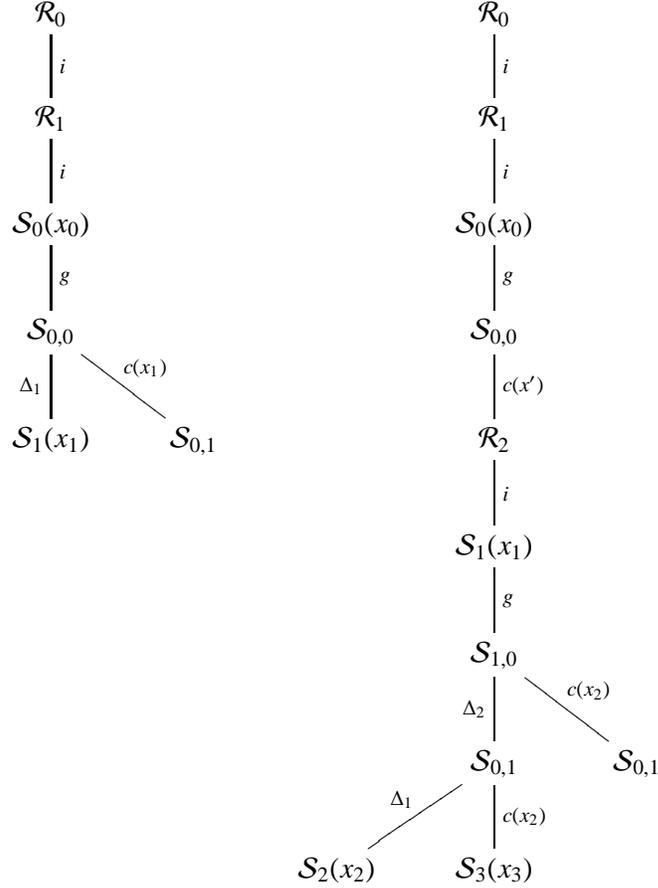

First of all, for later purposes, we want to separate a parent node~$\cS_0$
and its child nodes~$\cS_{0,j}$. Roughly each child node is taking care of
the old strategies which selects the $\cR$\nbd requirement above to pair with
and defines the corresponding function~$\Delta$. The first child
node~$\cS_{0,0}$ is always immediately following its parent node's $g$\nbd
outcome.

Let~$\cR_0$ and~$\cR_1$ be two consecutive $\cR$\nbd requirements, and let
the $\cR_1$\nbd node be extending the $\cR_0$\nbd node's $i$\nbd outcome.
Consider an $\cS_0$\nbd node extending the $\cR_1$\nbd node's $i$\nbd
outcome. Now, at the $\cS_0$\nbd node, as in a usual construction of this
type, we may have a diagonalization witness~$x_0$, but the use $\psi_0(x_0)$
may always be too large (say, $\ge \gamma_1(x_0)$), and so we go to the
$g$\nbd outcome. At the first $\cS_{0,0}$\nbd child node, we use
$\gamma_1(x_0)$ to kill the computation $\Psi_0(B;x_0)$ infinitely often,
say. At the same time, the $\cS_{0,0}$\nbd child node will build a
function~$\Delta_1$ to correctly compute~$W_1$ (for the $\cR_1$\nbd node).

Now, to make sure that~$\Delta_1$ is always correct, the $\cS_{0,0}$\nbd
child node has to set up some mechanism to prevent injury. In the
construction, we implement an alternating $A$\nbd stage/$B$\nbd stage
approach, so that at each stage, at most one~$A$ or~$B$ can change. There are
now two cases here. During a $B$\nbd stage,~$A$ does not change, and so
$W_1=\Phi_1(A)$ (up to the length of agreement) will not change, either,
since otherwise, we will not visit the $\cS_0$\nbd node again. During an
$A$\nbd stage,~$A$ can change but~$B$ does not. If now~$W_1$ changes, then we
can increase the~$\Gamma_1$\nbd use while preserving the $\Psi_0(B;x_0)$\nbd
computation. Then we observe that $\gamma_1(x_0)>\psi_0(x_0)$, and so we will
switch to the left of the outcome associated with~$\Delta_1$. In this
process, unless we move to the left of the outcome associated
with~$\Delta_1$, we see that the $\Psi_0(B;x_0)$\nbd computation is used to
protect~$\Delta_1$ during $A$\nbd stages, since only a $W_1$\nbd change
without a $B$\nbd change guarantees that we can move to the left of the
outcome associated with~$\Delta_1$; so, in the argument, it is crucial that
we can preserve the use of $\Psi_0(B;x_0)$.

Now, say, extending the $\Delta_1$\nbd outcome, we have another $\cS_1$\nbd
node with a witness~$x_1$. During an $A$\nbd stage $s_0$, it might want to
enumerate~$x_1$ into~$A$ for its own diagonalization (and so~$A$ would be
changed). By the observation above, we have to protect the use of
$\Psi_0(B;x_0)$ at the same time. However, if we implement the diagonalization procedure for $\cS_1$ here, then later at $s_1>s_0$ the $\cR_0$\nbd node's
$\Gamma_0$\nbd functional, after observing a change at~$x_1$ in~$A$, will
inevitably add~$\gamma_0(x_1)$ into~$B$ for $\Gamma_0$\nbd correction
(unless~$W_0$ has changed from $s_0$ to $s_1$, but this is not in our control). At $s_1$, however, there is no guarantee that $\gamma_0(x_1)>\psi_0(x_0)$.

The solution is thus briefly as follows: In such a situation at stage $s_0$, we instead go
to a different outcome to the right of the $\Delta_1$\nbd outcome, which we
call the $c$\nbd outcome. We stay at this $c$ outcome as long as $\gamma_0(x_1)\leq\psi_0(x_0)$ (since otherwise, there is no
problem). So at a following stage $s_1>s_0$, as long as $\gamma_0(x_1)\leq\psi_0(x_0)$ is still true, instead of using $\gamma_0(x_0)$ to kill the
$\Psi_0(B;x_0)$\nbd computation, we can use $\gamma_0(x_1)$. We say
that~$x_1$ is the \emph{claim point} for this $c$\nbd outcome at this stage $s_0$.
We count this as a small step toward success. Later at the next $\cS_{0,1}$\nbd
child node, we have a similar scenario for which we may go to the $c$\nbd
outcome with a larger claim point, etc. If this happens infinitely often
along the true path (i.e., there are infinitely many $\cS_{0,j}$\nbd nodes
with a $c$\nbd outcome along the true path), then we are using larger and
larger numbers to push $\psi_0(x_0)$ to infinity, and so the $\cS_0$\nbd
requirement will be satisfied in a $\Pi_3$\nbd way; on the other
hand,~$\Gamma_1$ is still active (since it is only injured finitely often at
each argument), so we do not have to build~$\Delta_1$ for it.

From a local viewpoint, the conflict happens when we see a computation
at~$\cS_1$ which we want to use to diagonalize, and some higher requirements
($\cR_1-\cS_0$) put some restraint on the diagonalization. So the $c$\nbd
outcome with a larger claim point~$x_1$ essentially allows us to freeze the
computation at~$\cS_1$ and at the same time allow~$\cR_1$ and~$\cS_0$ to
continue working towards success by switching the killing point from $x_0$ to $x_1$.

From a global viewpoint, while other outcomes are standard in this type of
gap-cogap construction, each such $c$\nbd outcome is a $\Sigma_2$\nbd type of
outcome, which states that in the construction, there is a stage with a claim
point such that we will keep this claim point (stay in the $c$\nbd outcome)
forever in the following construction.

\begin{table}[ht]
\begin{center}
\arrayrulecolor{green!50!black}
  \begin{tabular}{lcccccc}
{\bf\small Node}& {\bf\small Symbol}&  {\bf\small Access}&
{\bf\small Action} &{\bf\small Sub-action} &{\bf\small Outcomes}
 &{\bf\small Type} \\[1ex]
 \cmidrule{1-7}
{\footnotesize  $\mathcal{R}$} &	{\footnotesize  $\alpha$} &
 {\footnotesize normal} & {\footnotesize defines $\Gamma$}&
{\footnotesize $B$-enumerations} &{\footnotesize $i,f$} &
 {\footnotesize $\Pi^0_2$\ /\ $\Sigma^0_2$} \\[1ex]
{\footnotesize  $\mathcal{S}$-parent}   & {\footnotesize  $\beta$}&
 {\footnotesize normal or child-link}
& {\footnotesize clearing/claim} &
{\footnotesize $A$-enumeration}
 & {\footnotesize $d,g,w$} & {\footnotesize $\Sigma^0_1$\ /\ $\Pi^0_2$\ /\
  $\Sigma^0_2$ }\\[1ex]
 {\footnotesize  $\mathcal{S}$-child}	& {\footnotesize  $\beta_j$}  &
  {\footnotesize n.\ or own-parent-link}
 & {\footnotesize defines $\Delta$}  & {\footnotesize $B$-enumerations} &
 {\footnotesize $g_{\alpha_t}, c$} & {\footnotesize $\Pi^0_2$\ /\
  $\Sigma^0_2$}\\[1ex]
\end{tabular}
\vspace{0.2cm}
\caption{Nodes on the priority tree, their main actions and their outcomes}
\label{ta:vschelparbouatd}
\end{center}
\vspace{-0.8cm}
\end{table}

\subsection{The new idea: \texorpdfstring{$c$}{c}-outcomes}\label{sec:c}

The use of the $c$-outcomes is new and in fact crucial to our construction, so it is important to explain its use and address the differences between a $c$-outcome and a standard $g$-outcome (gap outcome) for example as in Section \ref{subse:SoR}.

As we have mentioned above, the $c$-outcome in our minimal example essentially allows us to freeze the computation at the $\mathcal{S}_1$ node (see Figure \ref{fig1}) as well as $\Delta_1$ to the left of the $c$-outcome, while waiting for a later stage when diagonalization is safe to perform (i.e., the $\Gamma_1$-use is large enough). It is important that here we do not perform any enumeration at this $c$-outcome. A natural attempt, which actually fails to work, would be to perform the same gap-cogap operation with the new witness $x_1$ at the $c$-outcome. The reason is that, it is possible that such a witness $x_0$ or $x_1$ at which the $\Gamma_1$-use is used to push $\Psi_0(x_0)$-use may change and possibly go to infinity. All the lower priority nodes, for a successful construction, need to guess at the outcome correctly. However, with only one (or even infinitely many) $c$-outcome where the gap-cogap strategy is performed, it is not possible for the lower priority nodes to know whether the witness will stop increase or go to infinity.

In general, a $c$-outcome is an outcome of a $\mathcal{S}$-child node, but unlike a $g$-outcome (of the same child node) it does not enumerate any elements (into $B$). Instead such enumeration is delayed to $g$-outcomes of other $\mathcal{S}$-children nodes below this $c$-outcome.

Along the $c$-outcome, the $\mathcal{S}$ requirement at the parent node is satisfied as we push its use $\Psi(x)$ to infinity for a fixed $x$ (the diagonalization witness at the parent node). This is the same as along the $g$-outcomes to the left of the $c$-outcome. What makes a $c$-outcome different is that the delay of $B$-enumerations allows us to satisfy the requirement $\mathcal{R}$ by keeping the corresponding $\Gamma$ total. Note that $g$-outcomes always kill such $\Gamma$ functional and so they need to build $\Delta$'s in order to satisfy $\mathcal{R}$.

In addition, such delay also allows us to work on other (lower priority) requirements between the $\mathcal{S}$-children nodes. That is, if a lower priority node is only below a $c$-outcome of the ``$\mathcal{S}$-family'' but not any of the $g$-outcomes of the children nodes, then it believes that $\Gamma$ is still total and so active. As a result, this also requires some minor adjustments to the different numbers used in our standard gap-cogap construction, which we will describe in the complete example below.

\subsection{A complete example: clearing point, killing point and claim
point}\label{sec:compl-ex}

Now we complete the minimal example above to add in all the features of the
construction. In particular, we will explain various numbers used during the
construction. (See again Figure~\ref{fig1}.) Table~\ref{ta:vschelparbouatd}
can be helpful as a guide on the general structure of the argument and the
complexity of the outcomes of the strategies.

Suppose we have two consecutive $\cR$\nbd requirements~$\cR_0$ and~$\cR_1$,
and the $\cR_1$\nbd node is extending the $\cR_0$\nbd node's $i$\nbd outcome.
Extending the $\cR_1$\nbd node's $i$\nbd outcome, we again have an
$\cS_0$\nbd node followed by its first $\cS_{0,0}$\nbd child node. Now,
extending the $c$\nbd outcome of the $\cS_{0,0}$\nbd node (with a claim
point~$x'$), we have an $\cR_2$\nbd node followed (along its $i$\nbd outcome)
by an $\cS_1$\nbd node.

Let~$x_1$ be the diagonalization witness for~$\cS_1$. When we try to
diagonalize against~$\Psi_1$ at stage $s_0$ when we see a convergent computation $\Psi_1(x_1)$, we first need to make sure that
$\psi_1(x_1)<\gamma_i(x_1)$ for $i=0,1,2$; in addition, notice that
the~$\cS_0$\nbd family currently has a $c$\nbd outcome, which means that
extending any outcome of the $\cS_1$\nbd node (e.g., the $d$\nbd outcome),
there will be more $\cS_{0,j}$\nbd child nodes, and later at any stage $s>s_0$ they will possibly enumerate~$\gamma_0(x')$ or~$\gamma_1(x')$ into~$B$ in order to push
$\psi_0(B;x_0)$ to infinity. This means that, for successful diagonalization
against~$\Psi_1$, we also need to care about possible $\Gamma$-use enumerations at~$x'$ (which is~$<x_1$). So here at stage $s_0$
we call such a number~$x'$ the \emph{clearing point} at~$\cS_1$ and use it to
clear the computation: For clearance, we require $\psi_1(x_1)<\gamma_i(x')$
for $i=0,1,2$. If this is not true, then we use~$x'$ (instead of~$x_1$) to
push $\psi_1(B;x_1)$ at the first $\cS_{1,0}$\nbd child node.

Say, at the $\cS_{1,0}$\nbd node, we choose to go along the $g_{\alpha_2}$\nbd
outcome building~$\Delta_2$ (since $\psi_1(x_1)\geq\gamma_2(x')$). Now
extending this $g_{\alpha_2}$\nbd outcome, say, we first have an
$\cS_{0,1}$\nbd child node. As required by the $\cS_{0,0}$\nbd node, the
$\cS_{0,1}$\nbd node uses~$x'$ to push $\psi_0(B;x_0)$ to infinity. We call
such a number~$x'$ the \emph{killing point} at~$\cS_{0,1}$. Say after stage $s_1>s_0$, such a child node also has a $c$\nbd outcome (whose claim point~$x_2$ comes from some
$\cS_2$\nbd node extending one of its~$g$\nbd outcomes, as in the minimal example). Extending such a $c$\nbd outcome, we have an $\cS_3$\nbd parent node, say with
diagonalization witness~$x_3$.

From the $\cS_3$\nbd node's point of view,~$\cR_2$ has been satisfied (by the
$\cS_{1,0}$\nbd node), and~$\cR_0$ and~$\cR_1$ are still active. The clearing
point at the $\cS_3$\nbd node is~$x_2$, because it believes that the new
$\cS_0$\nbd family members will use~$x_2$ instead of~$x'$ as the killing
point. So the $\cS_3$\nbd node checks whether $\psi_3(x_3)<\gamma_i(x_2)$ for
$i=0,1$.

Now suppose this is true, i.e., we have a cleared computation, say at stage $s_2>s_1$. Then, according to the minimal example above, we next want to make sure
that~$\Delta_2$ is preserved, and we try to clear the $\Psi_1(B;x_1)$\nbd
computation by going to the $c$\nbd outcome of the $\cS_{1,0}$\nbd node.

The tricky part is that, this time at stage $s_1$, for successful clearance, we actually
want $\psi_1(x_1)<\gamma_i(x_2)$ (for $i=0,1$) (instead of
$\psi_1(x_1)<\gamma_i(x_3)$): The reason here is that, to the right of
this~$\Delta_2$, later at any stage $s>s_2$ it is possible that a new $\cS_{0,j}$\nbd child node will use~$x_2$ as the killing point and enumerate~$\gamma_0(x_2)$
or~$\gamma_1(x_2)$ into $B$, and we do not want these numbers to injure
$\Psi_1(B;x_1)$, which we use to protect~$\Delta_2$. We say that~$x_2$ is the
\emph{claim point} of this $c$\nbd outcome at stage $s_2$ (later this claim point is used as the killing point for new~$\cS_{1,k}$\nbd child nodes). When we go to the $c$\nbd
outcome, i.e., the $\Psi_1(B;x_1)$\nbd computation is not cleared, then the
associated \emph{claim} here is that after this stage, it is always the case
that we do not get a clearance, i.e., it is always the case that
$\psi_1(x_1)\geq \gamma_i(x_2)$ for $i=0$ or~$1$.

\begin{table}[ht]
\begin{center}
\arrayrulecolor{green!50!black}
  \begin{tabular}{lccc}
{\bf\small Point}& {\bf\small $\mathcal{S}$-node}&
 {\bf\small Outcome}&{\bf\small Complexity}\\[1ex] \cmidrule{1-4}
{\footnotesize Witness}& {\footnotesize Parent }&
 {\footnotesize All}&{\footnotesize $\Sigma^0_1$}\\[1ex]
{\footnotesize Clearing}& {\footnotesize Parent}&
 {\footnotesize All}&{\footnotesize $\Sigma^0_1$}\\[1ex]
{\footnotesize Claim}& {\footnotesize Child}&
 {\footnotesize $c$}&{\footnotesize $\Pi^0_2$}\\[1ex]
{\footnotesize Killing}& {\footnotesize Child}&
 {\footnotesize $c$}&{\footnotesize $\Pi^0_2$}\\[1ex]
\end{tabular}
\vspace{0.2cm} \caption{Parameters of the $\mathcal{S}$\nbd nodes (parents
and children), associated outcomes and their complexity modulo
initialization.} \label{ta:vschelpauatd}
\end{center}
\vspace{-0.8cm}
\end{table}

\subsection{Overview of the \texorpdfstring{$\mathcal{S}$}{S}-strategies}
\label{sec:dynamicsS}
Table~\ref{ta:vschelpauatd} summarizes the parameters we have introduced for
the $\mathcal{S}$\nbd nodes (parents and children). In this section, we
summarize their dynamics and basic features, in a top-down description (as
opposed to the bottom-up motivational discussion of
Section~\ref{sec:compl-ex}). The diagonalization is done at the parent node,
with a witness which is fixed, as long as the parent node is not injured. The
same is true of the {\em clearing point}, which is another parameter of the
parent node. The clearing point is always less than or equal to the witness.
In the simple case that we described in Section~\ref{sec:min-ex}, we use the
witness as a clearing point, but in the presence of more requirements, we
need to differentiate between the two. The clearing point is the number on
which we may force the associated $\Gamma$\nbd functional to be partial.

Associated with the $c$\nbd outcome of each $\mathcal{S}_{ij}$\nbd child node
is the {\em claim point} of the node. Each time that the $c$\nbd outcome is
activated, it may have a different claim point. Each $\mathcal{S}_{ij}$\nbd
child node also has a {\em killing point}, which is calculated from the claim
points of the higher-priority child nodes. In this way, the killing points of
child nodes are raised according to the claim points of the higher-priority
child nodes with $c$\nbd outcomes. The $c$\nbd outcome of a child
node~$\beta_j$ is initiated by a parent node below~$\beta_j$ (not its own
parent).

\begin{table}[ht]
\begin{center}
\arrayrulecolor{green!50!black}
  \begin{tabular}{lllc}
{\bf\small Satisfaction of $\mathcal{S}$}& {\bf\small Main outcome}&
 {\bf\small Outcome}& {\bf\small Complexity}\\[1ex] \cmidrule{1-4}
{\footnotesize $\Psi(B; x)\un$ co-finitely} &{\footnotesize wait
 outcome (parent)} &{\footnotesize $\Gamma$ total}&{\footnotesize
 $\Sigma^0_2$} \\[1ex]
{\footnotesize $\Psi(B; x)\de\neq A(x)$ co-finitely} &
 {\footnotesize diagonalization (parent)}&{\footnotesize $\Gamma$ total}
 &{\footnotesize $\Sigma^0_2$}\\[1ex]
{\footnotesize $\Psi(B; x)\un$ infinitely often} & {\footnotesize gap outcome
 (child)}&{\footnotesize $\Gamma$ partial} & {\footnotesize $\Sigma^0_3$}
 \\[1ex]
{\footnotesize $\Psi(B; x)\un$ infinitely often}  & {\footnotesize all
 children true $c$-outcomes}&{\footnotesize $\Gamma$ total}&
 {\footnotesize $\Pi^0_3$} \\[1ex]
\end{tabular}
\vspace{0.2cm} \caption{Four different ways that requirement~$\mathcal{S}$
with witness~$x$ may be satisfied, and their complexity relative to the
corresponding parent node.} \label{ta:vpsytd}
\end{center}
\vspace{-0.8cm}
\end{table}

Along with the $c$\nbd outcome, an $\mathcal{S}_{ij}$\nbd child node
implements a gap-cogap strategy, sequentially with respect to the
$\Delta$\nbd functionals of higher-priority child nodes. This gap module
looks for appropriate changes in the approximation to the corresponding
sets~$W$, starting from the closest and moving monotonically toward the root
of the tree. The usual gap-cogap operation of a child node may be interrupted
by its $c$\nbd outcome infinitely often. Infinitely many $c$\nbd outcomes
along the child nodes of a parent node (in the `true path') means that the
functional we try to diagonalize against is partial. Table~\ref{ta:vpsytd}
displays all the different ways that requirement~$\mathcal{S}$ can be
satisfied. The first three ways displayed are typical to a gap-cogap
argument. However, the last case is special and corresponds to the case when
all children fail to succeed with their gap-cogap strategy. In that case,
$\Psi(B;x)$ becomes partial due to the enumeration of $\Gamma$\nbd uses on
larger and larger arguments. Table~\ref{ta:vpsytd} also displays the effect
that the outcomes have on the functional~$\Gamma$ that we build
for~$\mathcal{S}$. Note that in the context of the global construction, where
many requirements are present, the global outcomes are slightly more complex
(e.g.\ a $\Gamma$\nbd functional that is left intact by some child node may
end up partial due to a child of another parent).

\section{Construction}\label{sec:construction}

\subsection{Accessible path, stage dichotomy, accessible nodes and visited
nodes}\label{AB}

In the construction, each stage is either an $A$\nbd stage or a $B$\nbd
stage. We can arrange that all even stages are $A$\nbd stages and all odd
stages are $B$\nbd stages. During $A$\nbd stages, we are allowed to
change~$A$ but not~$B$; during $B$\nbd stages, we are allowed to change~$B$
but not~$A$. Each node first ignores the stage setting and follows the
construction. When the node wants to change~$A$ or~$B$, it checks whether the
current stage setting allows this action. If so, it changes~$A$ or~$B$ as
planned; if not, it terminates the stage and waits.

In addition, each node must try to pass down alternating $A$\nbd stages and
$B$\nbd stages along its (believed) true outcome. If the stage setting is not
the one expected, the node needs to wait for another stage to go to the
outcome we want. For instance, if a node needs to go to an outcome, and at
the last stage that outcome was accessible was an $A$\nbd stage, then we are
expecting a $B$\nbd stage this time. If this is a $B$\nbd stage, then there
is no problem; if this is an $A$\nbd stage, then we terminate the stage.

Now, in these two cases when we terminate the stage (since the stage is not
the one we wanted), at the very next stage (notice that the stage has changed
from~$A$ to~$B$ or from~$B$ to~$A$), we first check whether any~$W$ has
changed (from the previous stage) for those~$W$'s along the accessible path,
up to the previous length of agreement. If so, then for the highest one, we
switch to the $f$\nbd outcome if the length of agreement has decreased (and
it is easy to see that then we have a permanent win unless the node is
initialized), or to the $i$\nbd outcome if the length of agreement increased
(and so we switch to the left if we went to the $f$\nbd outcome at the
previous stage). Otherwise (if there is no $W$\nbd change, or the length of
agreement does not change, or the length of agreement has increased and we
went to the $i$\nbd outcome at the previous stage), then we directly go
through the same accessible path and continue the construction at the node
where we terminated the stage.\footnote{The intuition is that, since no one
has changed~$A$ or~$B$ from the last stage, and the~$W$'s have not changed,
either, unless we can diagonalize, all the uses of computations remain the
same. (See Lemma~\ref{lemAB} for the full proof later on.)} So either we can
change~$A$ or~$B$ as planned, or we can go to the outcome we wanted. In other
words, at each node, if the last stage was a terminated stage and there is no
$W$\nbd change, then we continue to the same outcome without any extra
action.

As in a usual priority tree construction, at each stage~$s$, we inductively
construct an \emph{accessible} path (up to length~$s$) on the priority tree.
At each node along the accessible path, we try to decide the outcome at
stage~$s$ and whether we want to change~$A$ or~$B$. Whenever~$A$ or~$B$ is
changed, we terminate the current stage and go to the next stage. We keep the
nodes that are to the left of, or compatible with, the accessible path and
initialize the nodes that are to the right.
Note that we may build a link in the construction and skip some nodes along
the accessible path (without going through the construction for them at that
stage). So we shall distinguish between notions of a node being visited and
being accessible. Being \emph{visited} means that we allow this
node to act according to the construction below; and being \emph{accessible}
only means that the node is on the accessible path, which does not
necessarily mean that the node itself is visited but possibly only some
extension of it is.

In the following subsections, we always assume that we are at a visited node
at stage~$s$.

\subsection{\texorpdfstring{$\cR$-node}{R-node}}

Consider an $\cR$\nbd node~$\alpha$ and note that if the last stage was a
terminated stage and~$W$ has not changed, then we continue to the same
outcome without any action. Otherwise, we check whether the \emph{length of
agreement} has increased since the last stage~$t$ when we visited this node
and the $i$\nbd outcome was accessible (or if such a stage~$t$ does not
exist, then we check whether the length of agreement is positive). If not,
then we go to the $f$\nbd outcome. If so, then we go to the $i$\nbd outcome.

The $\cR$\nbd node~$\alpha$ also defines a
functional~$\Gamma$ along the $i$\nbd outcome. We make sure that~$\Gamma$ is
\emph{well-defined}, i.e., we will not enumerate axioms that use the same
oracle but give different outputs. In particular, we may have some
\emph{requests} to add some numbers into~$B$ here which were assigned by
nodes below. What we do is simply put these numbers into~$B$ as planned if
the corresponding~$W$ has not yet changed (see Section~\ref{diag}).

For convenience, we allow the $W$\nbd use and $B$\nbd use for the same~$x$ to
be different (so we formally write $\gamma(W;x)$ and $\gamma(B;x)$ to denote
these uses, but later, when it is clear from the context that we are talking
about the $B$\nbd use, we will simply write~$\gamma(x)$). Since all the sets
we consider are c.e., at each stage we only need to keep one axiom
$\Gamma(B\oplus W;x)$ for a fixed~$x$. We have two cases in which we increase
the use. The first case is that some node below puts $\gamma(B;x)$ into~$B$
but $A(x)=0$; in this case, we increase the $B$\nbd use to be large and
fresh, and increase the $W$\nbd use to be the length of agreement
between~$\Phi(A)$ and~$W$ at this stage. The second case is when the $W$\nbd
use changes; then we increase the $B$\nbd use to be large and fresh and keep
the $W$\nbd use the same. In all other cases, we do not increase the uses but
simply update the axiom with the current oracle.

Of course, we obey the usual monotonicity rules of axioms, that is, whenever
we change the uses for some~$x$, we automatically make $\Gamma(B \oplus W;y)$
undefined for all $y>x$. In any case, we will ensure that $\Gamma(B \oplus
W;x)=A(x)$ for all $x \le$ the current length of agreement between~$\Phi(A)$
and~$W$ at this stage; if a use for $\Gamma(B \oplus W;x)$ had never been
picked before, then we pick the $B$\nbd use large and fresh, and the $W$\nbd
use to be the current length of agreement between~$W$ and~$\Phi(A)$;
otherwise, the use is specified as above.

\subsection{\texorpdfstring{$\cS$-parent node}{S-parent node}}

At an $\cS_i$\nbd node~$\beta$, if this is the first time at which we visit
this node, then we pick a fresh diagonalization witness~$x$ for it. Now if we
already have a diagonalization witness~$x$, then we check whether
$\Psi_i(B;x)$ converges to~$0$ with a \emph{believable} computation. Here, and in the following, a
computation $\Psi_i(B;x)[s]\de$ is believable  when there are no numbers below the use of this computation
that may enter $B$ at a later stage, by the nodes above $\beta$ (such are uses of $\Gamma$-functionals above $\beta$
that are partial from the point of view of $\beta$). If not, then
we go to the $w$\nbd outcome and continue to the next node. If we find out
that earlier we have already visited the $d$\nbd outcome (i.e., we have
already performed diagonalization at this node and $A(x)=1$). and~$\beta$ has
not been initialized since, then we continue to go to the $d$\nbd outcome.

If there is such a believable computation $\Psi_i(B;x)\downarrow=0$ (where,
when we see a believable such computation, we immediately initialize every
node extending the $w$\nbd outcome) but we have not yet performed
diagonalization (i.e., enumerated~$x$ into~$A$), then we perform the
following construction. We first check whether we can perform diagonalization
(see below in Section~\ref{diag}) and if so, follow the instructions; if not,
then we go to the $g$\nbd outcome (or some other outcomes according to
Section~\ref{nondiag} below) and continue to the next node.

\subsubsection{Diagonalization, setting  clearing and claim points}
\label{diag}


At~$\beta$, we consider those $\cS_{i'}$\nbd requirements which have $g$\nbd
outcome along~$\beta$ and none of whose child nodes has a $g$\nbd outcome
along~$\beta$. We think of the $\cS_{i'}$\nbd family as a whole as announcing
the current \emph{killing point for the requirement~$\cS_{i'}$}, which is
defined as the greatest number among all claim points of all $\cS_{i'}$\nbd
child nodes above or to the left of~$\beta$ as well as the clearing point
at~$\cS_{i'}$. Then we let the \emph{clearing point~$y$ at~$\beta$} be the
least of these killing points announced by the $\cS_{i'}$\nbd families from
above as well as~$x$ (if there is no such
higher-priority~$\cS_{i'}$).\footnote{Since~$x$ is a fresh number when it is
picked, this~$y$ is always less than or equal to~$x$ (Lemma~\ref{witness
lemma 1}). Roughly speaking, this~$y$ is going to be the least killing point
when we go to the right of~$\beta$, and so for successful diagonalization, we
want to make sure that~$\beta$'s computation is protected when we switch to
the right of it. In the complete example in Section~\ref{sec:compl-ex},
our~$x$ here is~$x_3$ there, and our~$y$ here is~$x_2$ there.}

We check whether $\gamma_k(y)>\psi_i(x)$ (for the
clearing point~$y$ defined above) for each active~$\cR_k$ above. If not, then
we go down to the $g$\nbd outcome here (see Section~\ref{nondiag}) and, at
the first $\cS_i$\nbd child node~$\cS_{i,0}$, we will go to the corresponding
$g_{\alpha_k}$\nbd outcome defining a function~$\Delta$ and add $\gamma_k(y)$
into~$B$ there (for the greatest such~$k$, see details below in
Section~\ref{child node}). If $\gamma_k(y)>\psi_i(x)$, then we proceed to the
following check.\footnote{If so, note that $y\leq x$, so it is automatic that
$\gamma_k(x)\geq \gamma_k(y)>\psi_i(x)$ and it seems that we are safe to
put~$x$ into~$A$.}

Here, it is possible that for some other~$\Delta'$ defined at an
$\cS_{i'}$\nbd child node~$\beta'$ above~$\beta$ (along the same path), we
use the corresponding $\Psi_{i'}(B;x')$\nbd computation to protect~$\Delta'$,
yet some $\gamma_k(x)$ entering~$B$ for~$\Gamma_k$ above this $\cS_{i'}$\nbd
node may cause injury, i.e., $\gamma_k(x)\leq \psi_{i'}(x')$.

If there is no such~$\beta'$, i.e., for every~$\beta'$ along~$\beta$, we have
$\gamma_k(x)> \psi_{i'}(x')$ as above, then we can put~$x$ into~$A$ and go to
the $d$\nbd outcome of~$\beta$. While doing that, we issue requests at each
active~$\cR$\nbd node above~$\beta$ to add~$\gamma(x)$ into~$B$ as follows:
Later when we visit $\cR$'s $i$\nbd outcome, if the corresponding $W$\nbd use
(for $\Gamma(B \oplus W;x)$) has changed, then we do not add~$\gamma(x)$
into~$B$, but otherwise, we add~$\gamma(x)$ into~$B$.


If we see such~$\beta'$, then fix the lowest (i.e., we process these nodes
from the bottom up) such~$\beta'$ for which $\gamma_k(x)\leq \psi_{i'}(x')$,
we consider all $\cS_{i''}$\nbd nodes above~$\beta'$ which have a $g$\nbd
outcome along~$\beta'$ but such that no child node has a $g$\nbd type outcome
along~$\beta'$ (i.e., the $\cS_{i''}$\nbd requirements that are still active
at~$\beta'$). For each such $\cS_{i''}$\nbd node, we only look at its child
nodes below~$\beta'$ (the $\cS_{i''}$\nbd \emph{family below}~$\beta'$).
These child nodes define a current killing point, i.e., the maximum claim
point (if such $\cS_{i''}$\nbd family below~$\beta'$ is empty, then let this
current killing point be infinity). Then we let the \emph{claim point}~$z$
of~$\beta$ be the minimum number among all these killing points of
$\cS_{i''}$\nbd families below~$\beta'$ (for all such~$\beta'$), as well
as~$x$, the diagonalization witness at~$\beta$. So automatically~$z$ is less
than or equal to~$x$. \footnote{Later we will see that it is automatically
greater than the killing point at~$\beta'$ (Lemma~\ref{witness lemma 2}). In
the complete example in Section~\ref{sec:compl-ex}, our~$z$ here happens to
be~$x_2$ there as well, just like our~$y$ here is~$x_2$ there, but this need
not be true in general.}

This $c$\nbd outcome at~$\beta'$ is now associated with the \emph{claim} that
``after this stage~$s$, it is always the case that~$\psi'(x')$ is greater
than or equal to~$\gamma_k(z)$ for some active~$\Gamma_k$ above the
$\cS_{i'}$\nbd parent node''. (For convenience we denote this claim by
$\mathcal{C}(\beta',z,s)$.) In addition, this $c$\nbd outcome announces
that~$z$ is the new killing point for lower-priority $\cS_{i'}$\nbd child
nodes, overwriting the old announcements made by higher-priority child nodes
for the same~$\cS_{i'}$. That is,~$\cS_{i'}$, as a whole requirement, now
switches the killing point to~$z$. In this case, we say that~$\beta$
\emph{initiates} the $c$\nbd outcome at~$\beta'$.\footnote{Later, when we
reach the parent node for~$\beta'$, we can check whether the condition
$\gamma_k(z)\leq \psi'(x')$ is still true, i.e., whether this claim is still
true; if not, then we will initialize everything extending the $c$\nbd
outcome at~$\beta'$ and declare that this node~$\beta'$ now gives permission
for diagonalization at~$\beta$.} We go to the $c$\nbd outcome of~$\beta'$ and
continue to the next node along that path.


\subsubsection{Possible link to child} \label{nondiag}

Now, at this time, if we do not have a chance to diagonalize, there might be
some $\cS_{i'}$\nbd child nodes below, whose $c$\nbd outcome has been
initiated with a claim about the size of $\psi_{i'}(x')$ and some
$\Gamma$\nbd uses of possibly larger~$x''$ (see above). We check if any of
these claims turn out to be false. For those corresponding $c$\nbd outcomes
whose claims turn out to be false, we initialize everything below the $c$\nbd
outcome of these child nodes and everything to the right of them.

In addition, we check whether there is an $\cS_{i,j}$\nbd child node such
that the last time it was visited we went to one of its $g$\nbd outcomes, and
now with the current conditions we see that we can switch to the left to that
$g$\nbd outcome. If there is such a child node, then we build a link directly
from the $\cS_i$\nbd parent node~$\beta$ to that child node, skipping every
node between them. Otherwise, we stay at the $\cS_i$\nbd parent node~$\beta$
and proceed to the next node along the $g$\nbd outcome.

\subsection{\texorpdfstring{$\cS$-child node}{S-child node}}
\label{child node}

When we reach an $\cS_{i,j}$\nbd child node~$\beta_j$ of an $\cS_i$\nbd
node~$\beta$, the construction proceeds as follows. First, as we have
mentioned above,~$\beta_j$ checks whether the $c$\nbd outcome was accessible
at the last stage~$t$ when we visited~$\beta_j$. If so, we check if the
associated claim $\mathcal{C}(\beta_j,z,t)$ is still true. In that case, we
go down to that outcome without doing anything here. If the claim is false,
then we have already initialized everything extending the $c$\nbd outcome
of~$\beta_j$ when we reach~$\beta$. In that case, there must be some
$\cS$\nbd parent note~$\beta''$ below some $g_\alpha$\nbd outcome
of~$\beta_j$ which initiated the $c$\nbd outcome of~$\beta_j$ here. If this
node~$\beta''$ has not been initialized since, then we directly link to
this~$\beta''$, allowing it to finish trying its diagonalization (without
visiting the nodes between~$\beta_j$ and~$\beta''$). If this~$\beta''$ has
already been initialized, then we proceed as in the following paragraph.

Otherwise, i.e., if we didn't visit the $c$\nbd outcome the last time we
visited~$\beta_j$, then we have a \emph{killing point}~$y$ here decided by
higher-priority $\cS_{i,j'}$\nbd child nodes~$\beta_{j'}$ above or to the
left of~$\beta_j$ (or by~$\beta$ itself if there is no such~$\beta_{j'}$):
$y$~is the largest of all the claim points of these~$\beta_{j'}$ as well as
the clearing point at~$\beta$. We also know that $\gamma(y)\leq \psi_i(x)$
for some functional~$\Gamma$ by some active $\cR$\nbd node above~$\beta$;
let~$\alpha$ be the lowest-priority such $\cR$\nbd node. Now we go to the
$g_\alpha$\nbd outcome. If this is a $B$\nbd stage, we also add $\gamma(y)$
into~$B$. For the functional~$\Delta$ associated with the $g_\alpha$\nbd
outcome, we extend~$\Delta$ up to the $W$\nbd use $\gamma(W;y)$. Then we
continue to the next node, this finishes the inductive step of the accessible
path construction.

\section{Verification}
We start with a few technical lemmas, then
we can show that there is a leftmost path accessible infinitely often (the
\emph{true path}) and every node on the true path has a true outcome. We then
show that all the functionals~$\Gamma$ (unless killed) and all
functions~$\Delta$ built along the true path are well-defined. This allows us
to show that all requirements are satisfied.

\subsection{Technical lemmas}

First of all, in our construction, we separated the stages into $A$\nbd
stages and $B$\nbd stages, and only allowed changes in~$A$ or~$B$ at $A$\nbd
stages or $B$\nbd stages, respectively. Sometimes, we may encounter the
situation that the algorithm wants to change~$A$ but the current stage is a
$B$\nbd stage, or vice versa, and so in the construction, we simply terminate
the stage and immediately try the next stage. (See Section~\ref{AB} for
details.) We start with a lemma proving that in this case, either we will
change the accessible path due to a $W$\nbd change (which will cause either
initialization of the node that wanted to enumerate, or the permanent
satisfaction of the requirement of a higher-priority node), or we can perform
the desired $B$- or $A$\nbd enumeration at the next stage.

\begin{lem}[Accessibility of $A$/$B$-stages]\label{lemAB}
Suppose at stage~$s$, we terminated the stage because the stage was not of
the type we wanted. Then at the next stage $s+1$, either some~$W$ changes and
we switch to the left or right of the accessible path at stage~$s$, or we can
perform the enumeration we wanted to perform at stage~$s$.
\end{lem}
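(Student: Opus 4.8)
The plan is to analyze the situation at stage $s$ when a stage is terminated because of a mismatch between the desired enumeration type (into $A$ or $B$) and the actual parity of the stage. The key observation is that at stage $s$ no enumeration into $A$ or $B$ actually took place (the stage was terminated before the enumeration), so $A$ and $B$ are literally unchanged from stage $s$ to stage $s+1$. The only thing that can differ at stage $s+1$ is one of the sets $W_i$ being approximated along the accessible path. So the proof splits into two cases according to whether some such $W$ changes between stages $s$ and $s+1$.

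First I would handle the case where some $W$ along the accessible path at stage $s$ (below its previous length of agreement) changes at stage $s+1$. Here I invoke directly the instructions in Section~\ref{AB}: at stage $s+1$ we check for such $W$\nbd changes, and for the highest one we either switch to the $f$\nbd outcome (if the length of agreement decreased, which gives a permanent win at that $\cR$\nbd node unless it is later initialized) or to the $i$\nbd outcome (if the length of agreement increased, in which case we move to the left of wherever we were at stage $s$). Either way, the accessible path at stage $s+1$ differs from that at stage $s$ to the left or right at the level of that $\cR$\nbd node, which is exactly the first alternative in the statement. I should note here that whichever node wanted to enumerate at stage $s$ is either to the right of this $\cR$\nbd node (hence initialized) or below its $i$\nbd outcome, so in the latter sub-case I would need to argue it is still reached; but since the statement only claims that \emph{either} the path shifts \emph{or} the enumeration can be performed, the path-shift alternative already suffices.

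Second, I would handle the case where no such $W$\nbd change occurs. Here the point is that the construction's fallback rule (again from Section~\ref{AB}) says we simply re-traverse the same accessible path and resume the construction at the node where we terminated stage $s$. I then need to check that this node still \emph{wants} to perform the same enumeration and that it is now \emph{permitted} to: since $A$ and $B$ are unchanged and no relevant $W$ has changed, every computation that was consulted at stage $s$ — in particular all the $\Gamma$\nbd uses $\gamma_k$, the $\Psi$\nbd uses $\psi_i$, and all length-of-agreement functions — has exactly the same value at stage $s+1$, so the node makes the same decision (diagonalize, or add a $\Gamma$\nbd use, etc.). The only thing that has changed is the stage parity, which has flipped from the "wrong" type to the "right" type; hence the enumeration that was blocked at stage $s$ is now allowed at stage $s+1$, which is the second alternative.

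The main obstacle I expect is the bookkeeping in the no-$W$-change case: one must verify carefully that \emph{nothing else} the construction does between the root and the relevant node at stage $s+1$ can alter the parameters or accessibility in a way that changes the decision — for instance, that no higher node independently wants to enumerate at stage $s+1$ and thereby terminate the stage before we get back down, and that links built or claims checked along the way behave identically. The clean way to dispatch this is to observe that all node actions are functions of the current approximations to $A$, $B$, and the $W_i$'s together with the stage parity, so once those are pinned down the entire traversal at stage $s+1$ agrees with that at stage $s$ up to the terminated node, and the flipped parity does the rest. A secondary subtlety is the terminated-stage fallback in the $\cR$\nbd and $\cS$\nbd node clauses ("if the last stage was a terminated stage and there is no $W$\nbd change, then we continue to the same outcome without any extra action"), which must be invoked to guarantee we indeed reach the same node rather than re-deciding outcomes along the way; I would cite those clauses explicitly.
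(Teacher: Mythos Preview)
Your proposal is correct and follows essentially the same approach as the paper: both arguments rest on the observation that $A$ and $B$ are unchanged between stages $s$ and $s+1$, so unless a relevant $W$ has changed, every criterion consulted along the accessible path is identical and the flipped stage parity permits the desired action. The paper's proof is briefer and organizes the case split slightly differently (by whether the accessible path actually switches, rather than by whether some $W$ changes), and it adds the side remark that a $W$\nbd change which does not decrease the length of agreement cannot injure any $\Delta$ already defined below the $i$\nbd outcome; but your more careful bookkeeping of the no-change case, including the explicit appeal to the ``terminated-stage fallback'' clauses, is if anything more thorough than what the paper writes.
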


\begin{proof}
According to the construction, assume that some~$W$ along the accessible path
(of stage~$s$) changes at stage $s+1$ by~$x$ entering~$W$: If this change
decreases the length of agreement between~$W$ and~$\Phi(A)$ and switches the
outcome of a strategy along the accessible path at stage~$s$ from an $i$\nbd
outcome to an $f$\nbd outcome, then we have permanent satisfaction of an
$\cR$\nbd requirement (unless some higher-priority node acts), since $W(x)=1$
and we have a computation $\Phi(A;x)=0$. If this change increases the length
of agreement or does not change it, then actually it will not affect any of
the~$\Delta$'s previously defined below the $i$\nbd outcome (since we only
define~$\Delta$ up to the length of agreement). Now, if we do not switch the
accessible path between stages~$s$ and $s+1$, then obviously, since we have
not changed~$A$ or~$B$ from stage~$s$ to stage $s+1$, all criteria required
for action remain the same, and we can perform the action (go to a certain
outcome or change~$A$ or~$B$) as at the previous stage~$s$.
\end{proof}

Usually, in a priority tree argument, one can simply see by inspection
that, for any computation (e.g., of~$\Psi$, $\Phi$) witnessed at a node, the
use cannot be changed by any node to the right of it (by the choice of
sufficiently large witnesses). However, in our construction, this is not
true. The problem is that, along a $c$\nbd outcome of an $\cS$\nbd child
node, the killing point~$z$ is determined by some node extending a
$g_\alpha$\nbd outcome of the $\cS$\nbd child node, i.e., to the left of its
$c$\nbd outcome. Therefore, potentially any $B$\nbd change up to~$\gamma(z)$
via at a node extending the $c$\nbd outcome might injure some $\Psi$\nbd
computations to the left of it. So we need a lemma stating that, in certain
cases, such injury cannot happen.

\begin{lem}[Link to a parent node]\label{injury lemma}
In the construction, if we see that a claim for a $c$\nbd outcome at some
$\cS_{i,j}$\nbd node~$\beta$ becomes false and build a link to an
$\cS_{i'}$\nbd node~$\beta'$ along a $g_\alpha$\nbd outcome (which initiated
the $c$\nbd outcome), then at that time, the computation at~$\beta'$ is still
the same as when~$\beta'$ initiated the $c$\nbd outcome.
\end{lem}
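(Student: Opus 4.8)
The plan is to track exactly which $B$-enumerations can occur between the stage $s_0$ at which $\beta'$ initiates the $c$-outcome at $\beta$ and the stage $s_1$ at which the claim $\mathcal{C}(\beta, z, s_0)$ is discovered to be false and the link is built. The key point is that the computation at $\beta'$ that must be preserved is of the form $\Psi_{i'}(B; x')$ for the diagonalization witness $x'$ of $\beta'$, and by the definition of a believable computation (Section on $\cS$-parent nodes) its use lies below every $\Gamma_k(B; \cdot)$-use on arguments that are ``partial from the point of view of $\beta'$''; so I must show no node active in the interval $(s_0, s_1]$ enumerates a number below $\psi_{i'}(x')$ into $B$.

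First I would note that when $\beta'$ initiated the $c$-outcome at $\beta$, the claim point $z$ of $\beta$ was chosen (per Section~\ref{diag}) to be the minimum over all active $\cS_{i''}$-families below $\beta'$ of their current killing points, together with $x$; in particular $z$ exceeds the killing point at $\beta'$ and hence $\gamma_k(z) > \psi_{i'}(x')$ held at stage $s_0$ for every active $\Gamma_k$ above the $\cS_{i'}$-parent node — this is the content of the footnote referencing Lemma~\ref{witness lemma 2}. Next I would enumerate the possible sources of a small $B$-enumeration in $(s_0,s_1]$: (i) $B$-enumerations of $\Gamma$-uses requested by diagonalizations at nodes strictly below $\beta$ along the accessible path; (ii) $B$-enumerations $\gamma(y)$ made at $\cS$-child nodes below $\beta$ implementing their gap modules; and (iii) $B$-enumerations by $\cR$-nodes rectifying a $\Gamma$ after an $A$-change. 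For (i) and (ii), the relevant killing/clearing points of those lower nodes are, by construction, $\geq z$ (the $c$-outcome at $\beta$ announced $z$ as the new killing point for lower-priority $\cS_{i'}$-children, overwriting earlier announcements), so the numbers enumerated are $\geq \gamma_k(z) > \psi_{i'}(x')$ and do no harm. For (iii), any $A$-change that would trigger rectification of a $\Gamma_k$ above $\beta'$ on an argument below $\psi_{i'}(x')$ would have to come from a diagonalization at a node above $\beta'$; but such a node has higher priority, so if it acted then $\beta$ (and $\beta'$'s initiation) would have been initialized, contradicting that the link is actually built — and for $\Gamma_k$ rectifications coming from $A$-changes at nodes below $\beta'$, the same killing-point bound as in (i)/(ii) forces the new $\gamma_k$-use to be picked large, above $\psi_{i'}(x')$, rather than destroying the computation.

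Finally I would observe that the falseness of the claim $\mathcal{C}(\beta,z,s_0)$ means precisely that at stage $s_1$ we have $\gamma_k(z) \leq \psi_{i'}(x')$ for every active $\Gamma_k$ above the $\cS_{i'}$-parent node (the negation of ``for some active $\Gamma_k$, $\psi_{i'}(x') \geq \gamma_k(z)$''); combined with the fact that $\psi_{i'}(x')$ has not changed in $(s_0, s_1]$ — which is exactly what the enumeration analysis above gives, since the $B$-oracle below $\psi_{i'}(x')$ was untouched and, on $A$-stages, no $W_{i'}$-change without a corresponding bump occurred at the $\cS_{i'}$-parent without switching its outcome — we conclude that the computation $\Psi_{i'}(B;x')$ at $\beta'$ at stage $s_1$ is literally the same computation (same use, same oracle segment) as at stage $s_0$. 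The main obstacle I anticipate is case (iii): carefully arguing that no $A$-change in the interval forces a $\Gamma_k$-rectification below $\psi_{i'}(x')$ from a node of priority intermediate between $\beta'$ and $\beta$; this is where one must invoke both the ``active at $\beta$'' machinery (so that the $\Gamma_k$ in question really is one of the functionals $\beta$ cleared against) and the alternating $A$-stage/$B$-stage discipline together with believability, to rule out the computation's use being moved by an $A$-side event rather than a $B$-side one.
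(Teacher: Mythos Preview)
Your plan has the right overall shape (bound every $B$-enumeration in $(s_0,s_1]$ from below and compare to $\psi_{i'}(x')$), but it is built around the wrong parameter. You organize the argument around the claim point~$z$ of~$\beta$, asserting that ``$z$ exceeds the killing point at~$\beta'$'' and that therefore $\gamma_k(z) > \psi_{i'}(x')$ at~$s_0$, and later that killing points to the right are~$\geq z$. Neither step is justified. Lemma~\ref{witness lemma 2} gives $z$ greater than the killing point of the \emph{child} node~$\beta$, not of the parent~$\beta'$ (which has a clearing point, not a killing point). More importantly, the announcement ``$z$ is the new killing point'' made at the $c$-outcome governs only the $\cS_i$-family to which~$\beta$ belongs; other $\cS_{i''}$-families active to the right of~$\beta'$ use their own killing points, which need not be~$\geq z$. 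So your bound ``numbers enumerated are $\geq \gamma_k(z)$'' fails for those families, and your cases (i)--(ii) do not close.

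The paper's proof avoids this by using the \emph{clearing point}~$y$ of~$\beta'$ rather than~$z$. By definition, $y$ is the minimum over \emph{all} active families' killing points (and~$x'$), i.e., the least killing point that can appear to the right of~$\beta'$. The first check in Section~\ref{diag} directly gives $\psi_{i'}(x') < \gamma_k(y)$ at~$s_0$. The entire argument then collapses to: if any node to the right ever used a killing point below~$y$, that killing point must have decreased, which forces initialization of~$\beta'$, so no link would be built. No case analysis (i)/(ii)/(iii) is needed. Your elaborate treatment of case~(iii), invoking believability and the $A$/$B$-stage discipline, is unnecessary once the correct parameter~$y$ is in hand, since $\Gamma$-rectification enumerations are also of the form $\gamma_k(\cdot)$ on arguments~$\geq y$.

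Finally, your closing paragraph misidentifies what the claim $\mathcal{C}(\beta,z,s_0)$ says: it is a statement about the $\Psi$-computation at the $\cS_i$-\emph{parent of}~$\beta$ (the node protecting~$\Delta$), not about $\Psi_{i'}(B;x')$ at~$\beta'$. The falseness of the claim is what triggers the link; it plays no role in bounding the enumerations that might injure~$\beta'$.
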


\begin{proof}
Say, at stage~$s_0$,~$\beta'$ initiated the $c$\nbd outcome and by the
criterion in the construction, we know that the use~$\psi(x)$ at~$\beta'$
(for the diagonalization witness~$x$ at~$\beta'$) is $\le \gamma(y)$ for the
least possible killing point~$y$ that can be used to the right of~$\beta'$.
If such~$y$ in the definition decreases (i.e., some node to the right uses a
smaller number as the killing point), then we would have initialized~$\beta'$
and would not build a link from~$\beta$. This means that when we build a link
back to~$\beta'$, its computation is preserved.
\end{proof}

\begin{lem}[Diagonalization of parent preserved]\label{diag pres lemma}
If an $\cS$\nbd node has performed diagonalization, then unless it is
initialized, its computation $\Psi(B;x)$ is always preserved.
\end{lem}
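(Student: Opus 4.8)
The plan is to fix the stage~$s_0$ at which the $\cS$\nbd parent node~$\beta$ performs its diagonalization, with (by then permanent) witness~$x$ and clearing point~$y$, and to put $\psi:=\psi(B;x)[s_0]$. Assuming~$\beta$ is never initialized after~$s_0$, it suffices to show that no number $\le\psi$ is enumerated into~$B$ at any stage $>s_0$, for then $\Psi(B;x)=0\ne 1=A(x)$ at every such stage. Three facts recorded at~$s_0$ drive the argument: \textup{(i)}~the computation $\Psi(B;x)=0$ is \emph{believable}, i.e.\ no number $\le\psi$ can later enter~$B$ on account of a $\Gamma$\nbd functional lying above~$\beta$ which is partial from~$\beta$'s point of view; \textup{(ii)}~$\gamma_k(y)>\psi$ for every $\cR_k$\nbd node active at~$\beta$; and \textup{(iii)}~$y\le x$ (Lemma~\ref{witness lemma 1}). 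Since~$\beta$ is not initialized after~$s_0$, neither is any strict ancestor of~$\beta$ (that would initialize~$\beta$), so the functional uses named in \textup{(i)} and \textup{(ii)} never decrease and the parameters~$x,y$ never change; hence \textup{(i)}--\textup{(iii)} remain valid after~$s_0$. We also note that every diagonalization request discharged after~$s_0$ was issued by~$\beta$ itself at~$s_0$ or by a descendant of~$\beta$: a request placed by a strict ancestor of~$\beta$ is, like the ancestor's diagonalization, already in place before~$s_0$, and it is discharged or cancelled at the next visit to the relevant $\cR$\nbd node on a $B$\nbd stage, which occurs before the accessible path reaches~$\beta$ at~$s_0$.

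Now consider a stage $s+1>s_0$ and a node~$\delta$ enumerating some number~$v$ into~$B$; we show $v>\psi$. Every $B$\nbd enumeration has the form $v=\gamma_k(z)$, where either an $\cR_k$\nbd node discharges a diagonalization request with~$z$ the witness of the requesting $\cS$\nbd parent, or an $\cS$\nbd child node enumerates its own killing point~$z$ along a $g_\alpha$\nbd outcome. The node~$\delta$ cannot lie to the left of~$\beta$, as that would initialize~$\beta$. Suppose first the responsible $\cR_k$\nbd node lies strictly below~$\beta$; then it lies below the $d$\nbd outcome of~$\beta$, and since~$\beta$ was at its~$w$\nbd or~$g$\nbd outcome immediately before~$s_0$, this subtree is created afresh at~$s_0$. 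Hence every witness, claim point and killing point occurring there---and therefore~$z$, and a fortiori the fresh use~$\gamma_k(z)$---exceeds~$\psi$; by the previous paragraph, the only requests in play are~$\beta$'s own (for~$\gamma_k(x)$ with $x\ge y$) and those of such fresh descendants, so this case is settled.

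Suppose instead the responsible $\cR_k$\nbd node lies above~$\beta$. If~$\cR_k$ is \emph{not} active at~$\beta$, then~$\Gamma_k$ is partial from~$\beta$'s point of view; no node weakly below~$\beta$ ever opens a $g_\alpha$\nbd gap against~$\cR_k$ or issues a request to it, because an $\cS$\nbd child node gaps (and a parent requests) only against $\cR$\nbd nodes active at that very node, and activity of an $\cR$\nbd node at a descendant of~$\beta$ entails activity at~$\beta$. Thus the only numbers put into~$B$ through~$\Gamma_k$ come from the $\cS$\nbd child node above~$\beta$ that capriciously destroys~$\Gamma_k$, and clause~\textup{(i)}, together with the fact that once such a use exceeds~$\psi$ it never drops back below~$\psi$, gives $v>\psi$. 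If, on the other hand,~$\cR_k$ \emph{is} active at~$\beta$, then~$z$ is~$x$ itself, or a fresh witness of a descendant of~$\beta$ (hence $>\psi$), or the killing point of an $\cS_{i'}$\nbd child node below~$\beta$ whose $\cS_{i'}$\nbd family is still adding child nodes at~$\beta$; in the last case the killing point announced by that family only grows as new child nodes are added, so~$z$ is at least the value announced at~$\beta$, which is $\ge y$ since~$y$ is by definition the \emph{least} killing point announced by the families lying above~$\beta$. In each sub\nbd case, $v=\gamma_k(z)\ge\gamma_k(y)>\psi$ by \textup{(ii)} and monotonicity of uses in the argument.

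I expect the crux to be the last sub\nbd case: one must check, directly against the definitions of the clearing, claim and killing points in Section~\ref{diag} (and against the witness lemmas), that every number ever put into~$B$ at an argument below~$x$ on behalf of a node weakly below~$\beta$ is at least the clearing point~$y$ chosen at~$\beta$---this is exactly what the definition of~$y$ as a minimum over the active families above~$\beta$ is arranged to achieve---and one must exclude stray enumerations arriving via a link, for which we invoke Lemma~\ref{injury lemma}: whenever such a link back to a parent node is built, the computation at that parent is still the one it had when it initiated the corresponding $c$\nbd outcome, so that no unforeseen small number is injected into~$B$.
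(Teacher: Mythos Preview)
Your argument is correct and follows the same underlying idea as the paper's proof: the clearing point~$y$ is by definition the least killing point that can be used to the right of (or below)~$\beta$, so as long as~$\beta$ is not initialized no smaller killing point appears, and the diagonalization criterion $\gamma_k(y)>\psi_i(x)$ together with believability then guarantees preservation. The paper's own proof is a two-sentence pointer back to Lemma~\ref{injury lemma} (``if a killing point~$y$ had decreased, then the node would have been initialized; if not, by our criterion the computation is preserved''), whereas you unpack this into an explicit case analysis of every possible $B$\nbd enumeration---this is more work but makes transparent exactly which features of the construction (freshness below~$d$, believability for inactive~$\Gamma$'s, the minimality of~$y$ for active ones) are being invoked.
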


\begin{proof}
The argument is almost the same as the previous lemma. If a killing point~$y$
had decreased, then it would mean that the node had been initialized. If the
killing point has not decreased, then by our criterion, the computation is
preserved.
\end{proof}

\subsection{True path lemmas}
Since our tree is finitely branching, there clearly is a leftmost path
accessible infinitely often (which we call the \emph{true path}). The
slightly tricky problem is that in the construction, there are two cases when
we build a link between two nodes and skip nodes in between: The \emph{first}
case is when an $\cS_i$\nbd node sees that an $\cS_{i,j}$\nbd child node can
now switch to the left; the \emph{second} is from a $c$\nbd outcome of an
$\cS_{i,j}$\nbd node to an $\cS_{i'}$\nbd node below one of its
$g_\alpha$\nbd outcomes. It is conceivable that some node on the true path is
skipped infinitely often but not visited infinitely often, or its outcome is
along the true path but is actually not the \emph{true outcome} (the leftmost
outcome we choose infinitely often when visiting the node). The following few
lemmas show that this case cannot happen. The idea to prove this is as
follows: Each time we skip over a node~$\beta$, we always ``blame'' a node
below it and make sure that such a node can only do this finitely often
before~$\beta$ is visited again.

\begin{lem}[First case skip]\label{true_path_lemma_1}
If a node~$\beta$ is skipped via the first case, then some node below it
switches left. In addition, if~$\beta$ is never visited again and never
skipped by the second case, then the skip for the first case can only happen
finitely often, and each time we will go strictly to the left of the previous
visit.
\end{lem}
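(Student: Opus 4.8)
\textbf{Proof plan for Lemma~\ref{true_path_lemma_1}.}
The plan is to unwind the construction of the ``first case'' link and track what happens to the relevant $\cS_{i,j}$\nbd child node. Recall that $\beta$ is skipped via the first case precisely when some $\cS_i$\nbd parent node $\beta_0\subseteq\beta$ (with $\beta_0$ enclosing $\beta$) builds a link to one of its child nodes $\beta_j$, because at the current stage the conditions have changed so that $\beta_j$ can move strictly to the left of the $g$\nbd outcome it went to at the previous stage it was visited. By the description of the $\cS$\nbd parent node in Section~\ref{nondiag}, the parent only builds such a link when it detects that some higher-priority $W$ (one of the $W$'s attached to the $g_{\alpha}$\nbd outcomes of $\beta_j$, ordered toward the root) has had the relevant change, so that the $g_{\alpha}$\nbd outcome with smaller index $\alpha$ (hence to the left) is now correct. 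This is the ``some node below it switches left'' clause: the node $\beta_j$ itself lies strictly below $\beta_0$ and, being linked to, is visited at this stage and goes to an outcome strictly left of where it last went.

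Next I would argue the finiteness clause. Assume $\beta$ is never visited again after some stage $s^*$ and is never skipped via the second case after $s^*$. Every first-case skip of $\beta$ after $s^*$ is caused by a link from some parent node $\beta_0\subseteq\beta$ to a child node $\beta_j$ strictly between $\beta_0$ and $\beta$ (or at least weakly below $\beta_0$ and compatible with $\beta$); since there are only finitely many such $\beta_0$ (they all lie on the finite initial segment of the tree up to $\beta$) and each has only finitely many child nodes compatible with $\beta$, it suffices to bound, for each fixed such pair $(\beta_0,\beta_j)$, the number of times a first-case link from $\beta_0$ to $\beta_j$ can occur (after $s^*$) and cause $\beta$ to be skipped. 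Each such link makes $\beta_j$ go strictly to the left of its previous visited outcome. The outcomes of a child node are $g_{\alpha_0},\dots,g_{\alpha_k},c$ from left to right, a finite list, and moving strictly left along this list is monotone. The only way the ``previous visited outcome'' can move back to the right is if $\beta_j$ itself is visited in between and goes to a rightward outcome, or if $\beta_j$ is initialized; but $\beta_j\supseteq\beta_0$ and $\beta_j$ compatible with $\beta$, together with $\beta$ not being visited again, forces that after $s^*$ the node $\beta_j$ is not visited (a visit to $\beta_j$ that proceeds past it would make $\beta$ accessible, and the only way $\beta$ is accessible without being visited is via a skip, which after $s^*$ must be a first-case skip linking past $\beta$ — but such a link lands at a child node, not past $\beta$, contradiction) and not initialized by anything that would then revisit $\beta$. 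Hence the previous-visited-outcome pointer of $\beta_j$ is nonincreasing (moves only strictly left) across the relevant stages, so only finitely many first-case skips of $\beta$ can occur, and each goes strictly to the left of the previous one — which is exactly the ``strictly to the left of the previous visit'' assertion, reading ``visit'' as the accessible-path outcome at $\beta$.

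The main obstacle I anticipate is the bookkeeping around what ``$\beta$ is never skipped by the second case'' buys us, and more generally making precise the claim that a child node $\beta_j$, once linked-to with a leftward jump, is not subsequently revisited or re-initialized in a way that resets its outcome pointer rightward without also revisiting $\beta$. This requires carefully using: (i) the tree geometry (the second-case links go from a $c$\nbd outcome of a child node down to a parent node below one of its $g_{\alpha}$\nbd outcomes, so they are confined to the subtree below a $g_{\alpha}$\nbd outcome and cannot by themselves ``jump over'' $\beta$ unless $\beta$ is in that subtree, which is the excluded case); (ii) the fact, to be used from the construction, that a first-case link is built by the parent $\beta_0$ only after checking the $W$\nbd changes that are \emph{permanent} in the relevant sense, so the leftward jump of $\beta_j$ is not spurious. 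I would formalize (i)–(ii) as small sub-claims before assembling the monotonicity argument above, and I expect the rest to be routine induction on the finitely many ancestor pairs $(\beta_0,\beta_j)$.
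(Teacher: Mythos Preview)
Your argument has a genuine gap in the finiteness clause, and it stems partly from a geometric slip. For a first-case link from a parent $\beta_0$ to a child $\beta_j$ to \emph{skip} $\beta$, one needs $\beta_0\subset\beta\subset\beta_j$: the child node sits strictly \emph{below} $\beta$, not between $\beta_0$ and $\beta$. Once you place $\beta_j$ correctly, your claim that ``each $\beta_0$ has only finitely many child nodes compatible with $\beta$'' becomes the heart of the matter and is not justified. After the link fires and $\beta_j$ switches to some $g_\alpha$\nbd outcome, the construction \emph{continues downward} from $\beta_j\,\hat{}\,g_\alpha$; the nodes visited there are all below $\beta$, and among them there may be brand-new child nodes (necessarily of other $\cS$\nbd families, since $\beta_0$'s family is closed along a $g$\nbd outcome) which are visited for the first time, take a $g$\nbd outcome, and thereby become eligible targets for future first-case links that again skip~$\beta$. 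So the set of pairs $(\beta_0,\beta_j)$ you are inducting over is not fixed in advance. Relatedly, your assertion that ``$\beta_j$ is not visited after $s^*$'' is false as stated: $\beta_j$ is visited precisely when the link lands on it, and the construction proceeds below it.

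The paper confronts exactly this difficulty (``during such a stage when $\beta$ is skipped, we may add new nodes below it which may cause extra links'') and resolves it differently: it observes that any such newly created link target must belong to an $\cS'$\nbd family whose parent node has \emph{higher} priority than the $\cS$\nbd parent responsible for the current skip, and then runs an induction on the finitely many $\cS$\nbd parent nodes above~$\beta$. More concretely, each skip is tagged with a combination of $\cS$- and $\cR$\nbd nodes above $\beta$, and one checks that successive skips strictly increase this combination in priority; since there are only finitely many such combinations, the process terminates. Your per-pair monotonicity idea is fine for a \emph{fixed} $\beta_j$, but you need this extra priority bookkeeping to control the creation of new $\beta_j$'s.
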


\begin{proof}
The first claim follows by inspection of the construction. For the second
claim, note that for every such link which skips~$\beta$,~$\beta$ must be
between an $\cS$\nbd node and one of its child nodes. A somewhat tricky
situation may arise that during such a stage when~$\beta$ is skipped, we may
add new nodes below it which may cause extra links. But observe that such a
new link must be associated with an $\cS'$\nbd parent node of higher priority
than the $\cS$\nbd node which causes the skip at the current stage, so by
induction on the number of $\cS$\nbd parent nodes above~$\beta$, one can see
that, if~$\beta$ is never visited again, such a skip (for the first case) can
only happen finitely often. More precisely, we associate each skip to a
combination of $\cS$- and $\cR$\nbd nodes of higher priority than~$\beta$,
and assign a natural priority on these combinations. It is then easy to check
that each time we go to the left, such a combination increases in priority,
and so this cannot happen forever.
\end{proof}

\begin{lem}[Second case skip]\label{true_path_lemma_2}
At any stage, for any given~$\beta$, there can be at most one node~$\beta'$
below~$\beta$ which has initiated a $c$\nbd outcome at a node above~$\beta$
such that the associated claim is still true. That is, during any fixed
stage, there can be at most one node which makes us skip~$\beta$ for the
second case.
\end{lem}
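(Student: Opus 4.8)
The plan is to argue by contradiction using the bookkeeping of \emph{killing points} and \emph{claim points}, together with the monotone way these numbers are raised along a path. Suppose toward a contradiction that at some stage there are two distinct nodes $\beta_1'$ and $\beta_2'$ strictly below $\beta$, each of which has initiated a $c$\nbd outcome at some node above $\beta$ and each of whose associated claim $\mathcal{C}(\cdot,z,\cdot)$ is still true. Without loss of generality, assume $\beta_1' \subset \beta_2'$ (they lie on the common accessible path through $\beta$). Each $\beta_\ell'$ is an $\cS$\nbd parent node whose $c$\nbd outcome was initiated at some $\cS_{i_\ell,j_\ell}$\nbd child node $\gamma_\ell$ with $\gamma_\ell \subset \beta$, and along the $g_{\alpha_\ell}$\nbd outcome of $\gamma_\ell$ we have the $\cS_{i_\ell}$\nbd family whose diagonalization witness computation $\Psi_{i_\ell}(B;x_\ell)$ is being protected; the claim $\mathcal{C}(\gamma_\ell, z_\ell, s_\ell)$ asserts that $\psi_{i_\ell}(x_\ell) \ge \gamma_k(z_\ell)$ for some active $\Gamma_k$ above the corresponding parent node, where $z_\ell$ is the claim point set by $\beta_\ell'$.

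Next I would examine the relative position of $\gamma_1, \gamma_2, \beta_1', \beta_2'$. There are two configurations. First, if $\gamma_1 = \gamma_2$, i.e., both $\beta_1'$ and $\beta_2'$ initiated the $c$\nbd outcome of the \emph{same} child node $\gamma$, then by the construction each initiation of that $c$\nbd outcome \emph{overwrites} the previous one (it initializes everything extending the $c$\nbd outcome and announces a fresh claim point $z$), so only the most recent initiator can have an uninitialized, still-true claim — contradiction. Second, if $\gamma_1 \ne \gamma_2$: since both lie on the accessible path with $\gamma_\ell \subset \beta$, say $\gamma_1 \subset \gamma_2$. Then $\gamma_2$ lies along one of the $g_\alpha$\nbd outcomes of $\gamma_1$ (or along the $c$\nbd outcome of $\gamma_1$; but $\beta_2' \supset \gamma_2 \supset \gamma_1$ and $\beta_2'$ initiated $\gamma_1$'s $c$\nbd outcome forces $\gamma_2$ to lie \emph{below} that $c$\nbd outcome, so $\gamma_2$ is on the $c$\nbd side of $\gamma_1$). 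Here is the crucial point: when $\beta_2'$ initiates the $c$\nbd outcome of $\gamma_2$, the claim point $z_2$ it picks is, by definition in Section~\ref{diag}, the minimum over all the killing points announced by the still-active $\cS_{i''}$\nbd families \emph{below} the relevant $\beta'$-node, as well as $x_2$; and by Lemma~\ref{witness lemma 2} (cited in the excerpt) this $z_2$ is automatically \emph{greater} than the killing point at that node. But the old claim of $\beta_1'$ at $\gamma_1$ is precisely about a $\Gamma_k$\nbd use $\gamma_k(z_1)$ where $z_1$ is the \emph{killing point} that was announced for the $\cS_{i_1}$ family, and when $\beta_2'$ acts it \emph{overwrites} that killing point with $z_2 > z_1$; equivalently, the act of $\beta_2'$ initiating its own $c$\nbd outcome changes the data underlying $\beta_1'$'s claim, which — by the parent-node check described in the construction and in Lemma~\ref{injury lemma} — forces re-examination and, if $\beta_1'$'s claim fails, initialization. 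More to the point, the construction is set up so that the \emph{innermost} (lowest-priority) such $\beta'$ is the unique one whose claim governs the current killing point for its $\cS_{i'}$ family; any higher $\beta_1'$ has had its announcement superseded, hence its $c$\nbd outcome has been initialized and it cannot currently be making us skip $\beta$.

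Thus in every configuration the assumption of two simultaneous skippers leads to a contradiction, proving that at most one node below $\beta$ can cause a second-case skip of $\beta$ at any given stage.

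I expect the main obstacle to be bookkeeping precision: one must verify carefully that when a lower node $\beta_2'$ initiates a $c$\nbd outcome it genuinely overwrites (and hence, via the parent-node's later consistency check, triggers initialization of) the announcement that an earlier $\beta_1'$ relied on, so that $\beta_1'$'s claim cannot remain \emph{both} uninitialized \emph{and} true. This hinges on the precise definition of ``current killing point for $\cS_{i'}$'' in Section~\ref{diag} (the maximum of claim points of $\cS_{i'}$\nbd child nodes above or to the left, plus the clearing point) and on the monotonicity statement of Lemma~\ref{witness lemma 2}; the rest is a finite case analysis on the possible nesting patterns of the two child nodes $\gamma_1, \gamma_2$ and the two parent nodes $\beta_1', \beta_2'$ along the single accessible path through $\beta$.
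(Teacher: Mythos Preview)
Your approach has a genuine gap, and in fact the structural case analysis you attempt cannot work as stated. The key error is in your Case~2: you assert that ``$\gamma_2$ is on the $c$\nbd side of $\gamma_1$,'' but this is false. By the construction (Section~\ref{diag}), an initiator $\beta_1'$ always lies below some $g_\alpha$\nbd outcome of the child node $\gamma_1$ it initiates (that is where the $\Delta'$ being protected lives). Since $\gamma_1 \subset \gamma_2 \subset \beta \subset \beta_1'$, the node $\gamma_2$ lies on the \emph{same} $g_\alpha$\nbd side of $\gamma_1$ as $\beta$ does, not on the $c$\nbd side. Your subsequent ``overwriting'' argument then collapses: $\beta_2'$ initiating the $c$\nbd outcome of $\gamma_2$ does not touch the claim $\mathcal{C}(\gamma_1,z_1,s_1)$ at all, since $z_1$ is the \emph{claim} point recorded at $\gamma_1$ (not a killing point that later children overwrite), and nothing in $\beta_2'$'s action at $\gamma_2$ alters that number or the inequality it asserts.

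What you are missing is that the reason is \emph{temporal}, not combinatorial. The paper argues by induction on stages: the first time any $\beta'$ below $\beta$ initiates a $c$\nbd outcome at some $\gamma$ above $\beta$, the construction immediately jumps to $\gamma$'s $c$\nbd outcome and proceeds from there. But $\beta$ (and hence every candidate second initiator $\beta_2'\supset\beta$) lies below a $g_\alpha$\nbd outcome of $\gamma$, i.e., strictly to the \emph{left} of the current accessible path; so no node below $\beta$ is visited again as long as the claim at $\gamma$ remains true. When that claim becomes false, the construction links directly back to the \emph{same} $\beta'$ (Lemma~\ref{injury lemma} guarantees its computation is intact), and $\beta'$ then either diagonalizes (reducing the count to zero) or initiates another $c$\nbd outcome at some higher child node (keeping the count at one). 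This ``single token'' picture is what you need to prove; the bookkeeping of claim and killing points is not where the uniqueness comes from.
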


\begin{proof}
Suppose~$s_0$ is the first stage such that the $c$\nbd outcome of~$\beta'$ is
initiated. Then, of course, at stage~$s_0$, there is only one such node (we
jump to the $c$\nbd outcome at~$s_0$). After that, either~$\beta'$ is
initialized; or the associated claim never becomes false, and so the claim of
the lemma remains true; or later the claim becomes false at stage~$s_1$ and
we build a link directly to~$\beta'$ skipping~$\beta$. At that stage, we note
that the computation at~$\beta'$ is still the same as that at stage~$s_0$ (by
Lemma~\ref{injury lemma}). So at stage~$s_1$, either~$\beta'$ again initiates
another $c$\nbd outcome even higher, or it follows diagonalization and now
there are no nodes which make us skip~$\beta$ (for the second case). The same
situation happens at every stage afterwards, and so the lemma follows.
\end{proof}

\begin{lem}[True path]
Along the true path, every node is visited infinitely often, therefore
all outcomes along the true path are true outcomes.
\end{lem}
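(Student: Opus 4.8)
The plan is to prove the True Path Lemma by induction along the true path, showing simultaneously that each node on the true path is visited infinitely often and that its leftmost outcome accessible infinitely often is in fact the outcome lying on the true path. The base case is the root, which is trivially visited every stage. For the inductive step, suppose $\beta$ is a node on the true path which is visited infinitely often, with true outcome $o$; let $\beta'$ be the child of $\beta$ extending the outcome $o$. I must show $\beta'$ is visited infinitely often. The only way this can fail is if, cofinitely often when the accessible path passes through $\beta$ and the outcome $o$, the node $\beta'$ is skipped by one of the two link mechanisms identified in the preamble: the \emph{first case} (an $\cS_i$\nbd parent node links forward to one of its $\cS_{i,j}$\nbd child nodes, skipping $\beta'$) or the \emph{second case} (a $c$\nbd outcome of an $\cS_{i,j}$\nbd child node links to an $\cS_{i'}$\nbd node below one of its $g_\alpha$\nbd outcomes, skipping $\beta'$).

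First I would handle the case where $\beta'$ is skipped infinitely often but only via the \emph{first case}. Here I invoke Lemma~\ref{true_path_lemma_1}: if $\beta'$ were never visited again (from some stage on), then first-case skips of $\beta'$ can happen only finitely often, and each one moves strictly to the left of the previous accessible position below $\beta'$ — contradiction, since there are only finitely many nodes of each length to the left. Hence $\beta'$ is visited infinitely often in this case, and the leftmost outcome accessible infinitely often below it is then, by the usual argument (the construction is deterministic given the approximations, and a finitely-branching tree has a leftmost infinitely-visited branch), a genuine true outcome; there is no ``phantom'' outcome that is on the accessible path infinitely often without being chosen.

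Next I would handle second-case skips using Lemma~\ref{true_path_lemma_2}: at any single stage there is at most one node $\beta''$ below $\beta'$ responsible for a second-case skip of $\beta'$ whose associated claim $\mathcal{C}$ is still true. So if $\beta'$ is skipped via the second case at infinitely many stages, I track the node $\beta''$ doing the skipping. Either some fixed $\beta''$ does it cofinitely often — but then the claim $\mathcal{C}(\cdot)$ associated with that $c$\nbd outcome remains true forever, so by Lemma~\ref{injury lemma} the computation at $\beta''$ is preserved, $\beta''$ itself lies on the accessible path cofinitely, and in particular $\beta''$ \emph{is} on the true path, below $\beta'$; but $\beta''$ is visited only through the link, never through $\beta'$, which forces $\beta'$ itself to be accessible (as the source end of links is accessible) — so I must argue that being the source of the link means $\beta'$ \emph{is} visited at that stage in the relevant sense, or else recount so that the node generating the link sits strictly below $\beta'$ and the claim being permanently true makes $\beta''$ (not $\beta'$) succeed, contradicting $\beta'$ being on the true path at all. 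The cleaner route: if no single $\beta''$ works cofinitely, then each $\beta''$ that ever triggers a second-case skip of $\beta'$ is, by Lemma~\ref{injury lemma} and Lemma~\ref{true_path_lemma_2}, either eventually initialized or eventually has its claim falsified, and when the claim is falsified we link back to $\beta''$ which then either diagonalizes (removing the skip permanently unless $\beta''$ is reinitialized) or initiates a new $c$\nbd outcome even higher up — and ``higher up'' together with ``finitely many $\cR$\nbd nodes above $\beta'$'' bounds how often this can recur, by the same combinatorial priority-on-combinations bookkeeping used in Lemma~\ref{true_path_lemma_1}.

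Finally I would combine the two cases: at a stage where the accessible path reaches $\beta$ with outcome $o$, either $\beta'$ is visited, or it is skipped by the first case, or by the second case; the two skip analyses above each allow only finitely many such skips before $\beta'$ is visited again, so $\beta'$ is visited infinitely often. By induction, every node on the true path is visited infinitely often, and hence — since the set of outcomes is finite and the choice of outcome is determined at each visit — the leftmost outcome chosen infinitely often is a true outcome, and it is the one extended on the true path. The main obstacle I anticipate is the interaction between the two link types: a stage that produces a first-case skip of $\beta'$ may simultaneously \emph{create} new nodes below $\beta'$ which later generate second-case skips, and vice versa, so the bookkeeping must assign to each skip a ``blame token'' — a tuple of $\cS$\nbd and $\cR$\nbd nodes strictly above $\beta'$ — in a way that is provably monotone across \emph{both} kinds of leftward motion; getting a single well-founded measure that dominates both mechanisms simultaneously is the delicate point, and it is exactly what Lemmas~\ref{true_path_lemma_1} and~\ref{true_path_lemma_2} are set up to supply.
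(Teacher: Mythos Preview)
Your overall architecture --- induction along the true path, invoking the First Case Skip and Second Case Skip lemmas, and worrying about the interaction between the two link mechanisms --- matches the paper's approach. But the second-case analysis contains a genuine gap that the paper's proof closes with an observation you do not make.

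First, a local confusion: your subcase ``some fixed $\beta''$ does it cofinitely often --- but then the claim $\mathcal{C}(\cdot)$ remains true forever'' is self-contradictory. A second-case skip of $\beta'$ occurs precisely at the stage when the claim at the child node above is \emph{falsified} and we link down to the initiating node $\beta''$; if the claim stayed true forever we would keep going to the $c$\nbd outcome and never build that link at all. You seem to sense this, since you immediately pivot to ``the cleaner route'', but the cleaner route has its own hole.

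The real gap is in your assertion that when $\beta''$ diagonalizes this ``remov[es] the skip permanently unless $\beta''$ is reinitialized''. It removes \emph{that} $\beta''$ as a source of second-case skips, yes --- but immediately after diagonalization, the construction proceeds below $\beta''$'s $d$\nbd outcome, where every node is fresh. Nothing in your argument prevents a brand-new $\cS$\nbd parent node $\beta'''$ down there from later seeing a computation, initiating a $c$\nbd outcome at some child node above $\beta'$, having that claim falsified, and triggering yet another second-case skip of $\beta'$. Your ``finitely many $\cR$\nbd nodes above $\beta'$'' bound controls how many times a \emph{single} initiating node can climb upward through $c$\nbd outcomes before diagonalizing, but it does not bound the supply of new initiating nodes created below successive $d$\nbd outcomes.

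The paper's proof plugs exactly this hole: once $\beta''$ diagonalizes, every node extending the $d$\nbd outcome is fresh, and at that moment the only remaining skip mechanism is the first case. By the First Case Skip lemma, each subsequent first-case skip moves strictly to the left, so any individual fresh $\cS$\nbd node below the $d$\nbd outcome is visited at most once. A parent node visited only once merely picks a witness and goes to its $w$\nbd outcome; it never sees a believable computation, never runs the clearing/claim check, and hence never initiates a $c$\nbd outcome. Therefore no new second-case skips can arise from below that $d$\nbd outcome, and the first-case leftward-motion argument finishes the job. This ``visited only once $\Rightarrow$ cannot request diagonalization'' observation is the missing ingredient that lets the two skip mechanisms be bounded simultaneously; it is exactly the well-founded measure you say you are looking for in your final paragraph but do not supply.
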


\begin{proof}
This follows essentially by combining Lemmas~\ref{true_path_lemma_1}
and~\ref{true_path_lemma_2}. Suppose some~$\beta$ on the true path is never
visited again. Whenever we skip~$\beta$ via the second case, then some node
below performs diagonalization, which means that any nodes extending the
$d$\nbd outcome will be fresh at that stage. At that moment, the only reason
we can skip~$\beta$ is the first case, and so the next time we skip
over~$\beta$, we must travel to the left of the current visit. It then
follows that below any of these diagonalization outcomes~$d$, we will not
have new nodes added which request diagonalization, since each such new
$\cS$\nbd node is visited only once.

Therefore we eventually switch to the left of this diagonalization outcome,
and by the same argument as in Lemma~\ref{true_path_lemma_1} above, such
skips cannot happen infinitely often. So one can only skip over~$\beta$
finitely often, and the lemma follows.
\end{proof}

In addition, we need to show that every node along the true path ``passes
down'' infinitely many $A$\nbd stages and $B$\nbd stages (in fact, in
alternating order), so every node has the chance to perform the action it
wants to eventually.

\begin{lem}[Alternating stages on true path]
In the construction, every node on the true path is visited infinitely often
at $A$\nbd stages and at $B$\nbd stages, respectively.
\end{lem}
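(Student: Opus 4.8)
The plan is to prove the statement by induction along the true path, exploiting the alternation device built into the construction in Section~\ref{AB}: a node declines to pass control down its (believed) true outcome — and instead terminates the stage — whenever the current stage does not have parity opposite to that of the last stage at which that outcome was accessible. For the base case, the root is accessible and visited at every stage, and since $A$-stages and $B$-stages alternate, it is visited at infinitely many $A$-stages and at infinitely many $B$-stages.

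For the inductive step, let $\beta$ be a node on the true path which is visited infinitely often at $A$-stages and at $B$-stages, let $o$ be its true outcome, and let $\gamma = \beta^\frown o$ be the next node on the true path. By the True Path lemma, $\gamma$ is accessible infinitely often and $\beta$ is initialized only finitely often; moreover, Lemmas~\ref{true_path_lemma_1} and~\ref{true_path_lemma_2}, together with the True Path lemma, yield that each of $\beta$ and $\gamma$ lies on the skipped segment of a link only finitely often. Fix a stage $s^\ast$ past the last initialization of $\beta$ and past the last stage at which $\beta$ or $\gamma$ is skipped by a link. Then for every stage $s > s^\ast$ at which $\gamma$ is accessible, $\beta$ is genuinely visited at $s$ and commits to the outcome $o$; hence, by the alternation device, the parity of $s$ is opposite to that of the previous stage at which $o$ was accessible, and — since there are no relevant skips after $s^\ast$ — that previous stage is the immediately preceding member of the sequence of stages $> s^\ast$ at which $\gamma$ is accessible. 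Therefore this sequence, which is infinite by the True Path lemma, strictly alternates in parity, so infinitely many of its stages are $A$-stages and infinitely many are $B$-stages; as these stages are genuine visits of $\gamma$, the inductive step is complete.

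Two points must be handled with care. First, one must check that $\beta$ is never blocked forever by the alternation device: if $\beta$ wishes to pass control down $o$ at a stage of the wrong parity and terminates, then by Lemma~\ref{lemAB} at the very next stage either $\beta$ may pass control down $o$ with the correct parity, or some $W$ along the accessible path changes and the path is switched to the left or right of $\gamma$. In the latter case one notes, using the rule in Section~\ref{AB}, that a $W$-change which increases a length of agreement does not of itself redirect the path leftward, so the detours that occur are of the controlled kind already accounted for on the true path, and — the True Path lemma already supplying that $\gamma$ is accessed infinitely often — they cannot prevent the above sequence from being infinite. Second, and this is the real obstacle, one must make the bookkeeping around the two kinds of links (the parent-to-child links of Section~\ref{nondiag} and the links emanating from $c$-outcomes) rigorous: the crux is precisely to show, via Lemmas~\ref{true_path_lemma_1} and~\ref{true_path_lemma_2}, that true-path nodes are skipped only finitely often, so that from $s^\ast$ on the parity check performed at $\beta$ really does govern every accessibility of $\gamma$; once this is in place, the alternation argument above goes through.
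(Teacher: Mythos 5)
Your proof is correct and follows essentially the same route as the paper's: invoke the alternation device from Section~\ref{AB} together with the True Path lemma (every node on the true path is visited infinitely often) to conclude that the accesses alternate in parity. The paper's own proof is a single short paragraph that elides the bookkeeping around links and skips; your version makes that bookkeeping explicit via Lemmas~\ref{true_path_lemma_1}, \ref{true_path_lemma_2}, and \ref{lemAB}, but the underlying idea is the same.
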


\begin{proof}
This is because in the construction, we require that when we pass to an
outcome, we require a different type of stage ($A$\nbd stage or $B$\nbd
stage) than the one when we last time went to that outcome (otherwise, we
wait and do nothing). Along the true path, as we proved above, every node is
actually visited infinitely often, and so by this criterion, every node is
visited at alternating $A$\nbd stages and $B$\nbd stages.
\end{proof}

Now in the following arguments, we always assume that we have a node~$\xi$ on
the true path and we have passed the stage when all nodes to the left stop
acting. Here, action include being visited or accessible, or $c$\nbd outcome
initiation. Since there is finite injury along the true
path, we also assume that~$\xi$ is the last node along the true path for its
requirement, and we only consider stages when it is visited.

\subsection{Witnesses and functionals}
First, we prove two lemmas about the witnesses and various other points we
use in the construction.

\begin{lem}[Clearing point and witness of parent node]\label{witness lemma 1}
Given an $\cS$\nbd node with diagonalization witness~$x$, the clearing
point~$y$ (as in the construction) is always less than or equal to~$x$, and
such~$y$ is stable if no node to the left acts again.
\end{lem}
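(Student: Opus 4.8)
The plan is to unwind the definition of the clearing point~$y$ and argue that each of the finitely many candidate quantities that $y$ is the minimum of is itself $\le x$, and then to argue stability by tracking how each candidate can change when only nodes below or to the left act. Recall from Section~\ref{diag} that the clearing point~$y$ at an $\cS$\nbd node~$\beta$ (with witness~$x$) is defined as the \emph{least} of the killing points announced by the still-active higher-priority $\cS_{i'}$\nbd families above~$\beta$, together with~$x$ itself (the latter being the default when there is no such higher-priority family). Thus $y \le x$ is immediate \emph{provided} we check that every killing point announced from above is itself a number that was picked before~$x$ was; since each such killing point is a maximum of claim points of higher-priority $\cS_{i'}$\nbd child nodes lying above or to the left of~$\beta$, together with the clearing point at the $\cS_{i'}$\nbd parent, and all of these are parameters set at stages strictly before the stage at which the fresh witness~$x$ was chosen, they are all less than~$x$ by freshness. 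This is the same freshness bookkeeping promised in the footnote attached to the definition.

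For stability, I would argue as follows. Assume no node to the left of~$\beta$ acts again after some stage~$s_0$. The witness~$x$ at~$\beta$ is fixed once~$\beta$ is not initialized (and~$\beta$ cannot be initialized without a node to the left, or strictly above, acting). Each killing point that enters the definition of~$y$ is the announced killing point of some active higher-priority family~$\cS_{i'}$; by the construction, this announcement is updated only when some $\cS_{i'}$\nbd child node above or to the left of~$\beta$ activates a $c$\nbd outcome and sets a new claim point, or when a new such child node is added. Adding a new $\cS_{i'}$\nbd child node strictly above~$\beta$ requires visiting a node above~$\beta$, hence (after the tree structure has settled) requires a node to the left of~$\beta$ to act; and activating a $c$\nbd outcome at a higher-priority child node above or to the left of~$\beta$ likewise requires action to the left of (or above)~$\beta$. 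Hence after~$s_0$ all the relevant killing points are frozen, and so is the default value~$x$, so~$y$ is stable.

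The main obstacle I anticipate is the careful justification that every parameter feeding into~$y$ is genuinely controlled by "nodes to the left" in the sense needed — in particular handling the subtlety that a $c$\nbd outcome of a higher-priority $\cS_{i'}$\nbd child node is \emph{initiated} by a parent node \emph{below} that child (Section~\ref{sec:dynamicsS}), so one must check that such an initiation, while triggered from below the child, still corresponds to action to the left of~$\beta$ (which it does, since the initiating parent lies below a $g_\alpha$\nbd outcome of the child, hence to the left of the $c$\nbd outcome and therefore to the left of~$\beta$ whenever~$\beta$ extends that $c$\nbd outcome; and if~$\beta$ does not extend it, the announcement is irrelevant to~$\beta$). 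Once this case analysis is pinned down, together with the freshness of~$x$ at the stage it is chosen, both halves of the lemma follow by direct inspection of the construction.
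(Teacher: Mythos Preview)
Your proposal is correct and follows the same approach as the paper: both argue by direct inspection of the construction, using freshness of~$x$ for the inequality $y\le x$ and tracking which nodes can alter the contributing parameters for stability. The paper's own proof is literally the single sentence ``This is by inspection of our construction,'' so your write-up is simply a detailed unfolding of that inspection (including the correct handling of the subtlety you flag in your last paragraph, namely that the relevant $\cS_{i'}$\nbd families are exactly those with no child having a $g$\nbd outcome along~$\beta$, so every contributing child node above~$\beta$ has $\beta$ extending its $c$\nbd outcome and hence its initiating parent lies to the left of~$\beta$).
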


\begin{proof}
This is by inspection of our construction.
\end{proof}

\begin{lem}[Claim and killing point of child node under $c$\nbd outcome]
\label{witness lemma 2}
Given an $\cS_{i,j}$\nbd child node, when its $c$\nbd outcome is initiated
(by~$\beta$, say), the corresponding claim point~$z$ (as in the construction)
is always strictly larger than its killing point, and is always less than or
equal to the diagonalization witness at~$\beta$.
\end{lem}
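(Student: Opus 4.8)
The plan is to verify both claims directly from the construction in Section~\ref{diag}, where the claim point~$z$ of a child node~$\beta_j$ is set at the moment~$\beta$ initiates the $c$\nbd outcome at~$\beta_j$. Recall that in that situation,~$\beta$ has a diagonalization witness~$x$, it has passed the clearing-point check (so $\gamma_k(y)>\psi_i(x)$ for the clearing point~$y$ at~$\beta$), but it has found a higher-priority child node~$\beta'$ (in the construction's notation, with a $\Delta'$ built along one of its $g_\alpha$\nbd outcomes) for which some $\gamma_k(x)\le\psi_{i'}(x')$. The construction then sets~$z$ to be the minimum, over all such~$\beta'$, of the current killing points of the $\cS_{i''}$\nbd families \emph{below}~$\beta'$, together with~$x$ itself. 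Here~$\beta_j=\beta'$ is exactly the child node whose $c$\nbd outcome is being initiated.

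For the upper bound~$z\le x$: this is immediate, since~$x$ is explicitly one of the numbers over which the minimum defining~$z$ is taken. (Here~$x$ is the diagonalization witness at~$\beta$, which is the node initiating the $c$\nbd outcome, matching the statement.)

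For the strict lower bound $z>\text{(killing point at }\beta_j)$: I would argue that every number entering into the minimum defining~$z$ strictly exceeds the killing point~$w$ of~$\beta_j$ (which, by the construction in Section~\ref{child node}, is the largest claim point of child nodes of~$\cS_i$ above or left of~$\beta_j$, together with the clearing point at~$\beta$'s own parent; in any case~$w$ is a parameter fixed before~$\beta$ is visited). First,~$x$ itself is the diagonalization witness at~$\beta$, and~$\beta$ lies below~$\beta_j$ along a $g_\alpha$\nbd outcome; since~$x$ is chosen fresh when~$\beta$ is first visited — and by Lemma~\ref{witness lemma 1} applied at~$\beta$'s parent, together with the fact that~$w$ is a killing point already announced when~$\beta$ was first accessible — we get $x>w$. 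Second, each killing point of an $\cS_{i''}$\nbd family below~$\beta'=\beta_j$ is a maximum of claim points of child nodes strictly below~$\beta_j$; by freshness of diagonalization witnesses (and again Lemma~\ref{witness lemma 1} to pass from witnesses to clearing points to claim points via Lemma~\ref{witness lemma 2} inductively on child nodes of higher priority), each such claim point was picked fresh at a stage after~$w$ had been announced, hence exceeds~$w$; if such a family is empty its killing point is~$\infty>w$ by convention. Taking the minimum of numbers all $>w$ still yields a number $>w$, so $z>w$ as required.

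The main obstacle I expect is not the inequalities themselves, which are forced by freshness, but rather the bookkeeping needed to confirm that all the relevant parameters (the killing point~$w$ of~$\beta_j$, the clearing point~$y$ at~$\beta$, and the various claim points below~$\beta'$) are indeed \emph{stable} at the stage of $c$\nbd outcome initiation — i.e., that no node to the left has reset them — so that the ``fresh when picked'' observations actually apply at the relevant stage. This is handled by the standing assumption (stated just before Section~\ref{witness lemma 1}'s neighborhood) that we only consider stages after all nodes to the left of~$\xi$ have stopped acting, together with Lemma~\ref{witness lemma 1}; so the argument reduces to an inspection of the construction, exactly as in the proof of Lemma~\ref{witness lemma 1}.
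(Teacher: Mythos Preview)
Your proposal is correct and follows essentially the same approach as the paper. The paper's proof is even terser: for the upper bound it also just says ``by inspection of the definition of~$z$'', and for the lower bound it states the inductive invariant slightly more crisply as ``such~$z$ is always a diagonalization witness below an $\cS_{i,j}$\nbd child node's $g_\alpha$\nbd outcome'', from which $z>w$ follows since whenever the killing point changes, every node below is initialized (so the witness was chosen fresh after~$w$). Your unpacking of the minimum into its constituents and your inductive appeal to the lemma itself amount to the same argument; the only cosmetic difference is that you argue each term in the minimum exceeds~$w$, whereas the paper identifies the minimum directly as a single diagonalization witness.
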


\begin{proof}
The second claim is by inspection of the definition of such~$z$. The first
claim follows from the fact (proved by induction) that such~$z$ is always a
diagonalization witness below an $\cS_{i,j}$\nbd child node's $g_\alpha$\nbd
outcome (for some~$\alpha$), and so larger than the killing point (whenever
it changes, every node below is initialized automatically).
\end{proof}

Next, we show that along the true path, every functional is correct on its
domain (modulo finite incorrectness for the~$\Delta$'s). It follows that the
functional computes the set we want if it has total domain.

\begin{lem}[$\Gamma$-functionals]\label{lemma gamma}
Every functional~$\Gamma$ is correct on its domain.
\end{lem}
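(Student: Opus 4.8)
\textbf{Proof plan for Lemma~\ref{lemma gamma}.}

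The plan is to fix a $\Gamma$\nbd functional built by an $\cR$\nbd node~$\alpha$ on the true path (along its $i$\nbd outcome, so that $\alpha$ is active in the relevant sense) and to show by induction on the stage that whenever $\Gamma(B\oplus W;x)$ is defined with value~$v$ at stage~$s$, we have $v = A(x)[s]$, and moreover this axiom is never falsified after stage~$s$ unless we deliberately make it undefined. Since the sets $A$, $B$, $W$ are c.e.\ and the construction keeps only one axiom per argument~$x$ at any stage, it suffices to examine each way the triple $(A(x), B\restriction\gamma(B;x), W\restriction\gamma(W;x))$ can change and verify that the construction reacts correctly. The three sources of change are: (i) $A(x)$ goes from $0$ to $1$; (ii) some number enters $B$ below $\gamma(B;x)$; (iii) some number enters $W$ below $\gamma(W;x)$.

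First I would handle the routine cases using the bookkeeping rules in the $\cR$\nbd node subsection. For (iii), a $W$\nbd change below $\gamma(W;x)$: by construction we then increase the $B$\nbd use to a large fresh number and keep the $W$\nbd use, making all $\Gamma(B\oplus W;y)$ for $y>x$ undefined; so the old axiom is discarded rather than falsified, and the new axiom is installed with value $A(x)$ at the current stage, which is correct by the monotonicity/length-of-agreement discipline. For (ii), a number entering $B$ below $\gamma(B;x)$: by the construction of the $\cS$\nbd child node, the only numbers ever enumerated into $B$ below an $\cR$\nbd node's current $\gamma(B;x)$ are $\gamma(B;x)$ itself (as a ``request''), enumerated precisely when $x$ was put into $A$; and the $\cR$\nbd node honors such a request only if the corresponding $W$\nbd use has not changed, in which case $A(x)=1$ already and we reset the $B$\nbd use to large fresh — again discarding, not falsifying. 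Here I would invoke the alternating $A$/$B$-stage discipline together with Lemma~\ref{lemAB} to argue that the enumeration of $x$ into $A$ and the later enumeration of $\gamma(B;x)$ into $B$ are correctly sequenced, so that at no intermediate stage is a defined axiom left with the wrong value: between the $A$\nbd change and the matching $B$\nbd request, $B$ below $\gamma(B;x)$ does not move (we are at $A$-stages, or $W$ has changed and we abandoned the request), so the axiom with value $0$ is made undefined at the argument $x$ the moment $x$ enters $A$, via the standard ``undefine everything $\ge$ the changed use'' rule of the construction. Case (i) is subsumed in case (ii), since $A(x)$ only changes when some $\cS$\nbd parent node diagonalizes with witness $x$, which is exactly when the $\gamma(B;x)$\nbd request is issued.

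The main obstacle I expect is the interaction with the $c$\nbd outcomes, precisely the phenomenon flagged in the paragraph preceding Lemma~\ref{injury lemma}: a $B$\nbd enumeration at a node extending a $c$\nbd outcome uses a killing point $z$ determined to the \emph{left} of that $c$\nbd outcome, so such enumerations are not automatically bounded below by ``large fresh'' numbers chosen to the right. Thus I cannot simply cite ``witnesses are large'' to rule out an enumeration into $B$ below $\gamma(B;x)$ for an $\cR$\nbd node $\alpha$ sitting above that $c$\nbd outcome. The resolution is to observe that any such enumerated number is of the form $\gamma'(z)$ for some $\Gamma'$ built by an $\cR$\nbd node $\alpha'$ that is \emph{active below} the relevant $\cS$\nbd child node (the one owning the $c$\nbd outcome), hence below $\alpha$; and by Lemma~\ref{witness lemma 2} and Lemma~\ref{witness lemma 1}, $z$ is itself a diagonalization witness chosen below that child node, hence chosen large relative to every parameter of $\alpha$, so $\gamma'(z) > \gamma_\alpha(B;x)$ for every $x$ with $\gamma_\alpha(B;x)$ currently defined. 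Therefore these ``left-determined'' $B$\nbd enumerations never land below a currently-defined $\Gamma_\alpha$\nbd use for an $\alpha$ above, and the correctness/non-falsification argument of the previous paragraph goes through unchanged. Combining the three cases with this observation, and noting that the rule ``whenever we change the use for $x$ we undefine $\Gamma(B\oplus W;y)$ for all $y>x$'' guarantees monotone consistency of the remaining axioms, we conclude that every axiom ever installed is correct at its installation stage and is only ever removed, never contradicted; hence $\Gamma$ is correct on its domain.
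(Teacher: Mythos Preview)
Your case split into $A$\nbd, $B$\nbd, and $W$\nbd changes is a reasonable skeleton, but two of the steps rest on claims about the construction that are not correct. In case~(ii) you assert that the only numbers entering~$B$ below $\gamma_\alpha(B;x)$ come from the request mechanism tied to~$x$ having entered~$A$. This is false: an $\cS$\nbd child node~$\beta_j$ taking a $g_{\alpha}$\nbd outcome enumerates $\gamma_\alpha(y)$ into~$B$ for its killing point~$y$ (Section~\ref{child node}), and such~$y$ may well be strictly smaller than~$x$; these enumerations happen whether or not any diagonalization witness has entered~$A$. Your ``main obstacle'' paragraph compounds the confusion: the left-of-$c$ injury phenomenon flagged before Lemma~\ref{injury lemma} concerns preservation of $\Psi$\nbd computations (needed for diagonalization and for protecting~$\Delta$), not $\Gamma$\nbd correctness; and your bound $\gamma'(z) > \gamma_\alpha(B;x)$ does not follow, since the enumerated number is $\gamma_{\alpha^*}(y)$ for some~$\alpha^*$ active above the $\cS$\nbd parent --- possibly~$\alpha$ itself --- and the killing point~$y$ need not be a fresh witness but may be a small clearing point inherited from far above.

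The fix is much simpler than anything you attempted: you do not need to bound $B$\nbd enumerations at all. Any number entering~$B$ below $\gamma_\alpha(B;x)$, whatever its source, merely \emph{undefines} the axiom (since~$B$ is c.e.); the $\cR$\nbd node then redefines $\Gamma_\alpha(B\oplus W;x) = A(x)$ at its next $i$\nbd visit. So $B$\nbd and $W$\nbd changes are automatically harmless for correctness. The only genuine danger is that $A(x)$ flips from~$0$ to~$1$ while the old axiom survives, and this is exactly what the request mechanism of Section~\ref{diag} handles: when an $\cS$\nbd parent puts~$x$ into~$A$, it issues a request at every active $\cR$\nbd node above; at that node's next $i$\nbd visit, either the $W$\nbd use has already changed (axiom already dead) or it enumerates~$\gamma(x)$ into~$B$ and redefines. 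That one observation is the entire content of the paper's proof. Finally, your stagewise formulation is too strong as stated: between the $A$\nbd stage at which~$x$ enters~$A$ and the later stage at which the request is honored, the axiom sits with value~$0$ while $A(x)=1$; the lemma is about correctness in the limit.
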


\begin{proof}
This is basically by inspection of the construction that when we add any
number~$x$ into~$A$, we always make sure to issue requests to add the
corresponding $\gamma(x)$\nbd uses into~$B$ at~$\Gamma$. It may be the case
that later when we visit~$\Gamma$, the corresponding~$W$ has changed up to
the use, and since~$W$ is c.e., such a change automatically makes the
functional undefined and so there is no problem in not adding~$\gamma(x)$
into~$B$ in this case. If~$W$ has not changed, then, of course, by the
construction, we will add~$\gamma(x)$ into~$B$ so that we can correct the
axiom.
\end{proof}

The next lemma is going to be the most crucial and most complicated lemma in
the proof. Let us first sketch the argument: To show that $\Delta=W$, it
suffices to show that whenever we define some~$\Delta$ as an initial segment
of~$W$, then this initial segment of~$W$ is not going to change in the
construction later. Now at $B$\nbd stages, this is obvious since $W=\Phi(A)$
where~$A$ does not change. At $A$\nbd stages, the argument is much trickier,
but is very similar to the standard argument used in the style of Lachlan's
gap-cogap construction. Basically, we have a computation $\Psi(B;x)$ to
protect an initial segment of~$W$ in such a way that if it changed (after we
changed~$A$) then we would switch to the left of the $\Delta$\nbd outcome.
The difficult part is to show that after~$A$ changes, the $B$\nbd use
of~$\Psi(x)$ is always protected. This is usually true since we have only
been to the right of such~$\Delta$, but remember that in our construction,
actions to the right may injure computations to the left.

\begin{lem}[$\Delta$-functionals]\label{lemma delta}
Every function~$\Delta$ is correct on its domain (modulo a fixed finite
amount of injury). More precisely, for every such~$\Delta$, there is a stage
after which~$\Delta$ is not going to be injured again.
\end{lem}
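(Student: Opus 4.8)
The plan is to show that each function $\Delta$ built along a $g_\alpha$-outcome of an $\cS_{i,j}$-child node on the true path is injured only finitely often, and that after the last injury every computation $\Delta(n)$ that gets defined is correct, i.e.\ equals $W(n)$. Fix such a $\Delta$, associated with an $\cS_{i,j}$-child node $\beta_j$ on the true path, the reduction $\Gamma_\alpha$ at an active $\cR$-node $\alpha$, and the killing point $y$ which (by Lemma~\ref{witness lemma 1} and Lemma~\ref{witness lemma 2}) is stable once no node to the left of $\beta_j$ acts again; let $s^*$ be a stage past which no node to the left of $\beta_j$ acts and past the last injury of $\alpha$'s $\Gamma_\alpha$ below $y$. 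I would first handle the easy half: at $B$-stages $A$ does not change, so $\Phi_i(A)$ does not change below its current length of agreement, hence the initial segment of $W$ on which $\Delta$ was just (re)defined cannot change at a $B$-stage without us first leaving the $g_\alpha$-outcome; thus any injury to $\Delta$ must come from a $W$-change occurring at an $A$-stage.

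Next I would carry out the gap-cogap core of the argument at $A$-stages. The key invariant is: whenever we sit at the $g_\alpha$-outcome at $\beta_j$ with $\Delta$ defined up to the $W$-use $\gamma_\alpha(W;y)$, we are holding a believable computation $\Psi_i(B;x)\de=0$ (for the diagonalization witness $x$ at the parent $\beta$) with $B$-use $\ge \gamma_\alpha(B;y)$, and $B$ below $\gamma_\alpha(B;y)$ is restrained except for our own enumeration of $\gamma_\alpha(y)$ at $B$-stages. So between two consecutive $A$-stages visiting $\beta_j$ along the $g_\alpha$-outcome, the only way $W$ changes below $\gamma_\alpha(W;y)$ is if $A$ changes; and if that happens, the construction increases $\gamma_\alpha(B;y)$ to a fresh large number exceeding $\psi_i(x)$ while preserving the $\Psi_i(B;x)$-computation (this is exactly step (a1)/(b2) of the gap-cogap module, now phrased via $A$-stages). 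But then $\gamma_\alpha(y)>\psi_i(x)$, which by the construction at $\beta_j$ causes us to move to the left of the $g_\alpha$-outcome — contradicting that $g_\alpha$ is the true outcome at $\beta_j$. Hence past $s^*$, $W$ never changes below $\gamma_\alpha(W;y)$ at an $A$-stage either, so every value $\Delta(n)$ defined past $s^*$ is permanent and correct.

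The main obstacle — and the reason this lemma is "the most crucial and most complicated" — is justifying that the $B$-use of $\Psi_i(B;x)$ really is protected past $s^*$, since in this construction actions to the \emph{right} of the $\Delta$-outcome can in principle enumerate numbers below $\psi_i(x)$ into $B$ (the $c$-outcome phenomenon flagged before Lemma~\ref{injury lemma}). Here is where I would invoke Lemma~\ref{injury lemma} and Lemma~\ref{true_path_lemma_2}: the only enumerations into $B$ below $\psi_i(x)$ coming from the right are $\Gamma_k(z)$-uses for killing/claim points $z$ belonging to $c$-outcomes initiated below; by the clearing-point check performed when such a $c$-outcome was initiated (the claim $\mathcal{C}(\beta',z,s)$), we had $\psi_i(x)<\gamma_k(z)$ at that point, and by Lemma~\ref{injury lemma} whenever we later link back and actually enumerate, the relevant computation is unchanged, so $\gamma_k(z)$ is still $>\psi_i(x)$ and the enumeration is harmless. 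Combined with Lemma~\ref{true_path_lemma_2} bounding how many such offenders exist at once, and with the finite-injury bookkeeping (each skip/initialization charged to a higher-priority $\cS$/$\cR$ combination), one gets a single stage $s^{**}\ge s^*$ after which the $\Psi_i(B;x)$-use is frozen; past $s^{**}$ the argument of the previous paragraph applies verbatim and $\Delta$ is never injured again. I would finish by noting the total count of possible injuries is finite because it is bounded by the number of higher-priority $\cR$- and $\cS$-strategies, giving the "fixed finite amount of injury" in the statement.
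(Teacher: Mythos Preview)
Your overall skeleton is right: the easy $B$-stage half, the gap-cogap core at $A$-stages, and the recognition that the real work is protecting $\Psi_i(B;x)$ after an $A$-enumeration by some $\bar\beta$ below $\beta_j$. But the third paragraph, where you try to secure that protection, misfires in two ways.

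First, the mechanism you invoke is the wrong one. You appeal to the claim $\mathcal{C}(\beta',z,s)$ ``when such a $c$-outcome was initiated'' to get $\psi_i(x)<\gamma_k(z)$ --- but at \emph{initiation} the claim is asserted \emph{true}, i.e.\ $\gamma_k(z)\le\psi'(x')$ for some~$k$, which is the opposite inequality, and moreover it concerns the parent of $\beta'$, not $\beta$. The inequality you need, $\gamma_k(z)>\psi_i(x)$ for \emph{all} active~$k$ above~$\beta$, appears only when the claim at $\beta_j$ itself becomes \emph{false}, which is exactly the permission check that allows $\bar\beta$ to diagonalize. Lemma~\ref{injury lemma} protects the computation at the \emph{initiating} node $\bar\beta$, not at $\beta$; and Lemma~\ref{true_path_lemma_2} bounds the number of second-case skips, which is orthogonal to the size of the $B$-enumerations. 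So neither lemma delivers what you claim.

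Second, and more seriously, you are missing the paper's key waiting stage. The paper fixes a stage $s_0$ by which \emph{every active $\cS$-parent $\beta'$ above $\beta$} already has a child node on the true path \emph{below} $\beta$ whose true $c$-outcome has been initiated. This is what guarantees that, after $\bar\beta$ diagonalizes, the $\gamma_k(z)$'s (for the claim point $z$ at $\beta_j$) really are the least possible numbers entering~$B$ before we return to~$\beta$ --- the higher families' killing points have been pushed past~$z$. Without this setup, some higher $\cS_{i'}$-family could still be using a small killing point and enumerate a $\gamma_k(y')\le\psi_i(x)$ into~$B$, destroying the computation you need. Your stage $s^*$ (``no node to the left acts'') does not give this, and your assertion that ``past $s^{**}$ the $\Psi_i(B;x)$-use is frozen'' is simply false: the use goes to infinity along the $g_\alpha$-outcome; what must be shown is that it is preserved \emph{within} each $A$-stage interval, and for that you need the $s_0$ setup plus the permission check at $\beta_j$.
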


\begin{proof}
Say, such~$\Delta$ is defined along a $g_\alpha$\nbd outcome (with killing
point~$x'$) of~$\beta_i$, which is a child node for~$\beta$ (where~$\beta$
has diagonalization witness~$x$).

In addition, we know that, for each parent node~$\beta'$ above~$\beta$ and
active at~$\beta$, every child node of this~$\beta'$ along the true path has
true $c$\nbd outcome. Now we have to wait for a stage~$s_0$ such that every
such~$\beta'$ has a child node below~$\beta$ (on the true path) with a true
$c$\nbd outcome initiated (i.e., a $c$\nbd outcome that will not be
initialized later).

We claim that after stage~$s_0$, the $\Delta$\nbd axioms are always correct,
i.e., compute $W=\Phi(A)$. If~$A$ does not change, then, of course,~$W$
cannot change. So we only need to consider the case when~$A$ changes, in
particular, below~$\beta_i$'s $g_{\alpha'}$\nbd outcomes, since otherwise,
such an $A$\nbd change must be to the right and cannot change the initial
segment of~$W$ witnessed at~$\beta'$.

Suppose that at some later stage~$s_1$, some node~$\bar{\beta}$
below~$\beta_i$'s $g_\alpha$\nbd outcome performs diagonalization (most
likely via a link under the second case). According to the construction, such
a node~$\bar{\beta}$ must receive permission from every child node with
$g_{\alpha'}$\nbd outcome above it. In particular,~$\beta_i$ needs to give
permission that $\gamma(z)>\psi(x)$, where~$z$ is the associated claim point
at~$\beta_i$, and the $\Gamma$\nbd uses range over all~$\Gamma$'s active
above~$\beta$.

By the definition of stage~$s_0$, such~$\gamma(z)$'s are going to be the
least possible numbers entering~$B$ when we switch to the left of~$\beta$;
and by Lemma~\ref{witness lemma 2},~$z$ is less than or equal to the
diagonalization witness added into~$A$. In addition, by inspection of the
construction, we know that at stage~$s_1$, the computation~$\Psi(B;x)$
converges. (Otherwise, the permission criterion $\gamma(z)>\psi(x)$ is always
false.)

So we know that, after we add the diagonalization witness into~$A$ at
stage~$s_1$, and before we come back to~$\beta$, the computation~$\Psi(B;x)$
at~$\beta$ is always preserved. Now it suffices to show that $W=\Phi(A)$ up
to~$\gamma(x')$ is preserved (recall that~$x'$ is the killing point
at~$\Delta$ and we always define~$W$ up to~$\gamma(x')$).

Otherwise, when we reach the $\cR$\nbd node and go to its $i$\nbd outcome, we
would see that the use $\gamma(W;x')$ has changed, and so according to the
construction, we will increase its $B$\nbd use without changing~$B$ here. In
particular, we know that when we reach~$\beta$ for the first time
after~$s_1$, $\gamma(x')>\psi(x)$, and according to the construction at
$\cS$\nbd nodes, we would immediately build a link to this~$\beta_i$ and
switch to the left of the outcome where~$\Delta$ is defined, and this, of
course, contradicts the assumption.
\end{proof}

\subsection{Final verification}
We are now ready to prove the satisfaction of all requirements. The following
two lemmas complete the verification of the construction of
Section~\ref{sec:construction} and the proof of our main theorem.

\begin{lem}[$\cS_i$\nbd requirements]
Every $\cS_i$\nbd requirement is satisfied.
\end{lem}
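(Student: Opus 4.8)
The plan is to argue by cases on the true outcome of the last $\cS_i$\nbd parent node~$\xi$ along the true path. Recall from the conventions above that~$\xi$ is visited cofinitely often, is never initialized after some stage, and (by the Lemma on Alternating Stages on the true path) is visited at infinitely many $A$\nbd stages and at infinitely many $B$\nbd stages; let~$x$ be its permanent diagonalization witness. The true outcome of~$\xi$ is one of~$d$,~$w$,~$g$, and I would treat these three cases in turn, in each case exhibiting~$x$ as a witness of $\Psi_i(B;x)\neq A(x)$.

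First, the two easy outcomes. If the true outcome is~$d$, then at some stage~$\xi$ performs diagonalization, enumerating~$x$ into~$A$ against a believable computation $\Psi_i(B;x)\de=0$, so that $A(x)=1$ permanently; by Lemma~\ref{diag pres lemma}, and since~$\xi$ is never initialized again, this computation is preserved forever, so $\Psi_i(B;x)\de=0\neq 1=A(x)$. If the true outcome is~$w$, then~$x$ is never enumerated into~$A$, so $A(x)=0$ permanently, and it suffices to show $\Psi_i(B;x)\neq 0$. Were $\Psi_i(B;x)\de=0$ with the final oracle and use~$\psi$, then for all sufficiently large stages the initial segment $B\restriction\psi$ has settled and this computation is present; the only possible obstruction to its being \emph{believable} would be some $B$\nbd use $\gamma(B;k)<\psi$ of a $\Gamma$\nbd functional above~$\xi$ that is partial from~$\xi$'s point of view, but by Lemma~\ref{lemma gamma} and the fact that every such killer lies on the true path (so acts infinitely often), all such uses are eventually driven above~$\psi$ once $B\restriction\psi$ has settled. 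Hence the computation would eventually be believable, forcing~$\xi$ to take the~$g$ or~$d$ outcome cofinitely, contrary to~$w$ being the true outcome. (This believability bookkeeping is routine given the true\nbd path lemmas.)

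The substantive case is when the true outcome of~$\xi$ is~$g$; then there are $\cS_{i,j}$\nbd child nodes on the true path below that outcome, and in each subcase the goal is to show that $\psi_i(x)$ is unbounded, so that $\Psi_i(B;x)\un$. If some child~$\beta_j$ on the true path has true outcome~$g_\alpha$, then no further $\cS_i$\nbd children are added below it, so~$\beta_j$ is the last $\cS_i$\nbd child on the true path, and its killing point~$y_j$ is eventually fixed (by Lemmas~\ref{witness lemma 1} and~\ref{witness lemma 2}). By the alternating\nbd stages discipline~$\beta_j$ goes to~$g_\alpha$ at infinitely many $B$\nbd stages, at each of which it enumerates $\gamma_\alpha(y_j)$ into~$B$; since going to~$g_\alpha$ requires $\gamma_\alpha(y_j)\leq\psi_i(x)$, while by the $\cR$\nbd node rule each such enumeration forces~$\Gamma_\alpha$ to re\nbd pick this $B$\nbd use large and fresh, we get $\gamma_\alpha(y_j)\to\infty$ and hence $\psi_i(x)\to\infty$. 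Otherwise every child $\beta_0,\beta_1,\dots$ on the true path has true outcome~$c$, so there are infinitely many of them, and by Lemma~\ref{witness lemma 2} (each new claim point strictly exceeding the then\nbd current killing point, which is the maximum of the earlier claim points) their stable claim points satisfy $z_0<z_1<\cdots$ with $z_j\to\infty$. For each~$j$, the truth of the $c$\nbd outcome of~$\beta_j$ means that the associated claim $\mathcal{C}(\beta_j,z_j,s_j)$ holds at every stage $s\geq s_j$: at each such~$s$ there is an~$\cR_k$ active above~$\xi$ with $\psi_i(x)[s]\geq\gamma_k(z_j)[s]$, hence $\psi_i(x)[s]\geq\min_k\gamma_k(z_j)[s]$, the minimum over the finitely many~$\cR_k$ active above~$\xi$. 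Taking $\liminf$ over~$s$ and using that each such $B$\nbd use $\gamma_k(z_j)$ exceeds its argument~$z_j$, is non\nbd decreasing in its argument in the limit, and is driven to infinity whenever~$\Gamma_k$ is partial at~$z_j$, one obtains $\liminf_s\psi_i(x)[s]\geq z_j$ for every~$j$; since $z_j\to\infty$, it follows that $\psi_i(x)$ is unbounded.

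The main obstacle is this last subcase, the $\Pi^0_3$\nbd outcome in which all children take their $c$\nbd outcomes. Two points need care: that the claim points genuinely escape to infinity (which is exactly Lemma~\ref{witness lemma 2} together with the rule by which killing points are updated from higher\nbd priority claim points), and that the ``true $c$\nbd outcome'' invariant really yields the persistent lower bound $\psi_i(x)[s]\geq\min_k\gamma_k(z_j)[s]$ --- here one must observe that the particular active~$\Gamma_k$ witnessing the claim $\mathcal{C}(\beta_j,z_j,s_j)$ may depend on the stage~$s$, so that it is the displayed minimum over the finitely many active~$\cR_k$, not a single fixed~$k$, for which one takes the $\liminf$. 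The remaining bookkeeping --- that the $g_\alpha$\nbd outcome enumerations in the first subcase actually land at $B$\nbd stages (by the alternating\nbd stages discipline), that the corresponding $B$\nbd uses genuinely re\nbd freshen, and the believability bookkeeping in the~$w$ case --- is routine and follows the pattern of the standard gap\nbd cogap argument.
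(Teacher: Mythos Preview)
Your proof is correct and follows essentially the same approach as the paper's: both split on the true outcome of the parent node and, in the $g$\nbd case, further split on whether some child has a true $g_\alpha$\nbd outcome (the $\Sigma_3$\nbd case) or all children have true $c$\nbd outcomes (the $\Pi_3$\nbd case). You are simply more explicit than the paper in two places --- the $w$\nbd case (which the paper dismisses in one clause by restricting attention to the case of infinitely many believable computations) and the $\Pi_3$\nbd case, where you spell out why $z_j\to\infty$ via Lemma~\ref{witness lemma 2} and why the persistent inequality $\psi_i(x)\ge\gamma_k(z_j)$ forces divergence, while the paper just asserts ``$\psi_i(x)\geq\gamma(z)$ for arbitrary large~$z$''. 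One minor quibble: your citation of Lemma~\ref{lemma gamma} in the $w$\nbd case is not quite the right hook; what you actually need there is that each $\Gamma$\nbd functional believed partial above~$\xi$ has its relevant $B$\nbd use driven to infinity by an $\cS$\nbd child node on the true path, which follows from the true\nbd path and alternating\nbd stages lemmas rather than from correctness of~$\Gamma$.
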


\begin{proof}
Let~$\beta$ be the last $\cS_i$\nbd parent node along the true path. It is
easy to check that, once we perform diagonalization, then the
$\Psi_i(B;x)$\nbd use is going to be preserved (as we choose the killing
point~$y$ to be the least one such that some~$\gamma(y)$ may enter~$B$ later
in the construction). So we only need to consider the case when we infinitely
often see a believable computation $\Psi_i(B;x)$ but we cannot perform
diagonalization.

Our argument now splits into two cases. One is that there is an $\cS_i$\nbd
child node below~$\beta$ on the true path which has true $g_\alpha$\nbd
outcome (we call this case the $\Sigma_3$\nbd outcome for~$\beta$, i.e., the
requirement is satisfied in a $\Sigma_3$\nbd fashion). The other is that
every $\cS_i$\nbd child node below~$\beta$ on the true path has true $c$\nbd
outcome (similarly, we say~$\beta$ has true $\Pi_3$\nbd outcome).

In the first case, obviously according to the criterion at~$\beta$,
$\psi_i(x)\geq \gamma(x')$ for the killing point~$x'$ at~$\beta$, and the
latter goes to infinity by our construction. So $\Psi_i(B;x)$ diverges and
our requirement is satisfied.

In the second case, by our criterion for going to $c$\nbd
outcomes,~$\psi_i(x)$ is going to be greater than or equal to~$\gamma(z)$ for
arbitrary large~$z$, and this also implies that $\Psi_i(B;x)$ diverges.

In addition, in the second case, it is easy to see that, for each claim
point~$z$, all $\cS_i$\nbd child nodes eventually give up using~$z$ and start
using the next~$z'$ as a killing point (later this will allow us to show that
the ``impact'' of this action on each higher-priority~$\Gamma$ is finite).
\end{proof}

\begin{lem}[$\cR_i$\nbd requirements]
Every $\cR_i$\nbd requirement is satisfied.
\end{lem}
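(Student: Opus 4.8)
The plan is to show that each $\cR_i$\nbd requirement is satisfied by examining the true outcome of the last $\cR_i$\nbd node $\alpha$ along the true path. I would first dispose of the trivial case: if the hypothesis $\Phi_i(A)=W_i$ fails (equivalently, if $\alpha$'s true outcome is the $f$\nbd outcome, i.e.\ the length of agreement between $\Phi_i(A)$ and $W_i$ is bounded), then $\cR_i$ is vacuously satisfied. So the interesting case is when $\alpha$'s true outcome is the $i$\nbd outcome, meaning $\Phi_i(A)=W_i$ is a genuine (total) equality and there are infinitely many expansionary stages. Under this assumption I must produce \emph{either} a total functional $\Gamma$ with $\Gamma(B\oplus W_i)=A$ \emph{or} a total computable $\Delta=W_i$.

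The key dichotomy is whether some $\cS$\nbd child node on the true path, extending a $g_\alpha$\nbd outcome \emph{paired with $\alpha$}, lies below $\alpha$. If there is such a pair $(\alpha,\beta_j)$ on the true path, then along the $g_\alpha$\nbd outcome a function $\Delta$ is being built; by Lemma~\ref{lemma delta} this $\Delta$ is correct on its domain modulo finitely much injury, and since the $g_\alpha$\nbd outcome is visited infinitely often and we extend $\Delta$ up to the $W$\nbd use $\gamma(W;y)$ at each visit (with the killing point $y$, hence also $\psi_i(x)$, going to infinity by the argument in the $\cS_i$\nbd lemma), $\Delta$ has total domain and computes $W_i$. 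So $\cR_i$ is met via its second disjunct. The remaining case is that no such pair is active on the true path, i.e.\ $\alpha$ is \emph{active} at every node on the true path below it (no $g_\alpha$\nbd outcome and no enclosing pair). Then the functional $\Gamma=\Gamma_\alpha$ built along the $i$\nbd outcome is never capriciously destroyed. By Lemma~\ref{lemma gamma} it is correct on its domain, so it remains to show its domain is total, i.e.\ that for every $x$ the axiom $\Gamma(B\oplus W_i;x)$ is eventually permanent.

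The heart of the matter is bounding, for each fixed $x$, the number of times $\gamma(B;x)$ is injured (put into $B$ or increased). Increases happen for two reasons (as listed in the $\cR$\nbd node construction): a $W_i$\nbd change below the current use, or some lower node enumerating $\gamma(B;x)$ into $B$ for $\Gamma$\nbd correction after $x$ enters $A$. The first kind happens only finitely often for each fixed $x$ because $\Phi_i(A)=W_i$ with $A$ the limit of a settling approximation, so $W_i\restriction(\text{any fixed length})$ is eventually stable. For the second kind, the numbers $x$ enumerated into $A$ below $\alpha$ come from diagonalization witnesses and from $\Gamma$\nbd use enumerations of $\cS$\nbd child nodes; the crucial point—this is where I expect the main obstacle—is that although a single $\cS$\nbd family with a true $c$\nbd outcome enumerates infinitely many $\gamma(B;z)$'s, the claim points $z$ go to infinity, and for each \emph{fixed} argument a given family eventually abandons it (the last remark in the $\cS_i$\nbd lemma: for each claim point $z$, all child nodes eventually give up using $z$). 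Combined with the finite-injury structure along the true path (finitely many $\cS$\nbd families above any fixed level that are ever active, by the true-path lemmas), this gives that each fixed $\gamma(B;x)$ is enumerated into $B$ only finitely often. Hence $\Gamma(B\oplus W_i;x)$ stabilizes for every $x$, $\Gamma$ is total, and $\Gamma(B\oplus W_i)=A$, so $\cR_i$ is met via its first disjunct. The main work is therefore the careful bookkeeping of $B$\nbd enumerations at $\gamma(B;x)$ for fixed $x$, showing finiteness by induction on the $\cS$\nbd and $\cR$\nbd nodes above the relevant level, exactly in the spirit of the priority-counting argument of Lemma~\ref{true_path_lemma_1}.
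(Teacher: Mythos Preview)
Your approach mirrors the paper's proof: the same two-case split (some $g_\alpha$\nbd outcome on the true path yields a total $\Delta$ via Lemma~\ref{lemma delta}, otherwise $\Gamma$ is total via Lemma~\ref{lemma gamma}), and the same appeal to the last paragraph of the $\cS_i$\nbd lemma to bound the killing-point enumerations.

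Two small corrections are worth noting. In the $\Delta$ case, the killing point $y$ at the fixed true-path child node does \emph{not} go to infinity; it is fixed once initialization stops. Totality of $\Delta$ comes instead from the $W$\nbd use $\gamma(W;y)$ growing without bound, since each time $\gamma(B;y)$ is enumerated into $B$ the $\cR$\nbd node resets $\gamma(W;y)$ to the current length of agreement. In the $\Gamma$ case, the paper argues by induction on $x$ (assuming $\Gamma(B\oplus W_i;x')$ has settled for all $x'<x$, so that monotonicity injuries from below have ceased), and the dangerous $B$\nbd enumerations to bound are the $\gamma(B;y)$'s for killing points $y\le x$ coming from $\cS$\nbd child nodes; the post-diagonalization $\Gamma$\nbd correction you emphasize happens at most once per argument and is not the issue. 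Your invocation of ``each family eventually abandons a fixed argument'' is exactly the right finishing step, but you should frame the preceding bookkeeping around killing-point enumerations and the induction on $x$ rather than around $A$\nbd enumerations.
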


\begin{proof}
We let~$\alpha$ be the last $\cR_i$\nbd node along the true path. Of course,
we only need to consider the case that $W=\Phi(A)$ is total, and so we go to
the $i$\nbd outcome of~$\alpha$ infinitely often, building~$\Gamma$. Now if
there is an $\cS_i$\nbd child node along the true path with true $g$\nbd
outcome associated with~$\alpha$, then by Lemma~\ref{lemma delta}, the
function~$\Delta$ built there is going to correctly compute~$W$, and so the
$\cR_i$\nbd requirement is satisfied.

If there is no such $\cS$\nbd child node along the true path, then we need to
argue that for each fixed~$x$, $\gamma(x)$ only changes finitely often, and
so by Lemma~\ref{lemma gamma},~$\Gamma$ is going to be a functional
computing~$A$ from $B \oplus W$, and our $\Phi$\nbd requirement is also
satisfied.

So fix an~$x$. We can assume that $A(x)=0$ in the end, since otherwise,
after~$x$ enters~$A$, the $\Gamma$\nbd use is going to change for the last
time and then settle down forever. By our construction, if~$W$ changes, we
only increase the $B$\nbd use without changing the $W$\nbd use, and so the
only case in which we may increase the $\Gamma$\nbd use forever is that it
happens infinitely often that some $\cS$\nbd child node below~$\alpha$ has
outcomes associated with~$\alpha$ and puts~$\gamma(y)$ for $y\leq x$ into~$B$
during $B$\nbd stages (where~$y$ is the killing point). By induction
hypothesis, we can assume that $\Gamma(B \oplus W_i;x')$ has settled down for
every $x'<x$. Obviously, only finitely many $\cS$\nbd requirements can
use~$x$ as a killing point. Now by the last paragraph of the proof of the
previous lemma and by our assumption, all such child nodes which use~$x$ as
its killing point will eventually give up using~$x$, and so eventually each
$\Gamma(B \oplus W_i;x)$\nbd use settles down.
\end{proof}

\end{document}